\let\old@setaddresses\@setaddresses
\def\@setaddresses{\bigskip{\parindent 0pt\let\scshape\relax\let\ttfamily\relax\old@setaddresses}}
\newtheorem{theorem}{Theorem}
\newtheorem{corollary}[theorem]{Corollary}
\newtheorem{lemma}[theorem]{Lemma}
\theoremstyle{remark}
\newtheorem{remark}[theorem]{Remark}
\newcommand{\torsten}[1]{\todo[inline,color=red!40]{\textbf{Torsten:} #1}}
\title{Graphs that admit a Hamilton path are cup-stackable}
\author{Petr Gregor}
\address[Petr Gregor]{Department of Theoretical Computer Science and Mathematical Logic, Charles University, Prague, Czech Republic}
\email{gregor@ktiml.mff.cuni.cz}
\author{Arturo Merino}
\address[Arturo Merino]{Universidad de O'Higgins, Rancagua, Chile}
\email{arturo.merino@uoh.cl}
\author{Torsten M\"utze}
\address[Torsten M\"utze]{Institut f\"ur Mathematik, Universit\"at Kassel, Kassel, Germany \& Department of Theoretical Computer Science and Mathematical Logic, Charles University, Prague, Czech Republic}
\email{tmuetze@mathematik.uni-kassel.de}
\author{Francesco Verciani}
\address[Francesco Verciani]{Institut f\"ur Mathematik, Universit\"at Kassel, Kassel, Germany}
\email{francesco.verciani@uni-kassel.de}
\thanks{This work was supported by Czech Science Foundation grant GA~22-15272S. The authors participated in the workshop `Combinatorics, Algorithms and Geometry' in March 2024, which was funded by German Science Foundation grant~522790373.}
\begin{document}
\begin{abstract}
Fay, Hurlbert and Tennant recently introduced a one-player game on a finite connected graph~$G$, which they called cup stacking.
Stacks of cups are placed at the vertices of~$G$, and are transferred between vertices via stacking moves, subject to certain constraints, with the goal of stacking all cups at a single target vertex.
If this is possible for every target vertex of~$G$, then $G$ is called \emph{stackable}.
In this paper, we prove that if $G$ admits a Hamilton path, then $G$ is stackable, which confirms several of the conjectures raised by Fay, Hurlbert and Tennant.
Furthermore, we prove stackability for certain powers of bipartite graphs, and we construct graphs of arbitrarily large minimum degree and connectivity that do not allow stacking onto any of their vertices.
\end{abstract}

\keywords{}

\maketitle

\section{Introduction}
\label{sec:intro}

Motivated by the popular sport of speed stacking, where the goal is to quickly stack and unstack cups in various formations, a recent paper of Fay, Hurlbert and Tennant~\cite{MR4749373} introduced a one-player game on a finite connected graph~$G$, which they called \defi{cup stacking}.
Initially, there is a single cup placed on every vertex of~$G$.
Each move consists of removing all the $r$ cups placed on one vertex~$x$ and stacking them onto the cups of a vertex~$y$ in distance exactly~$r$ from~$x$, provided that $y$ already has at least one cup on it.
The objective is to eventually move all cups onto a single target vertex~$t$, in which case $G$ is called \defi{$t$-stackable}.
If $G$ is $t$-stackable for every vertex~$t$, then it is called \defi{stackable}.
Note that the number of vertices with cups on them decreases by 1 with every move, so on a stackable graph the game always ends after exactly $n-1$ moves, where $n$ is the number of vertices of~$G$.
We remark that stackability is \emph{not} a monotone property under adding edges.
In particular, there are graphs that are stackable, which by adding edges turn into a non-stackable graph.

Cup-stacking is a recent addition to the zoo of one- and two-player games where `things move around' in a finite graph, such as graph pebbling~\cite{MR4339423}, graph pegging~\cite{MR2510324}, chip firing~\cite{MR1120415}, cops and robbers~\cite{MR2830217}, etc.

The paper~\cite{MR4749373} proves that paths, cycles, two-dimensional grids, the $d$-dimensional hypercube $Q_d$ for $d\leq 20$, Kneser graphs~$K(n,k)$ for $n\geq 3k-1$, and Johnson graphs with diameter~2 are stackable.
It also characterizes which complete partite graphs are stackable.
The authors also conjectured that the hypercube~$Q_d$ is stackable for all $d\geq 1$, and more generally that Cartesian products of paths of arbitrary lengths are stackable.
They also asked whether all connected Kneser graphs and generalized Johnson graphs are stackable.

\subsection{Our results}

The main contribution of this work is to show that graphs that admit a Hamilton path are stackable (Theorem~\ref{thm:ham}).
This considerably enlarges the catalogue of graphs that are known to be stackable.
In particular, this yields stackability of $d$-dimensional hypercubes for all $d\geq 1$, Cartesian products of paths of arbitrary lengths, connected Kneser graphs and generalized Johnson graphs (Corollary~\ref{cor:impl}), resolving the problems raised by Fay, Hurlbert and Tennant~\cite{MR4749373}.

In view of this result, the remainder of our paper focusses on graphs that do not admit Hamilton paths.
To this end, we present a versatile tool, which we call `chunking lemma' (Lemma~\ref{lem:chunk}), to split paths in a bipartite graph into subpaths of the correct sizes so as to stack the cups from the subpaths to a particular target vertex.
We apply this lemma to bipartite graphs with small diameter (Theorem~\ref{thm:bip}), and powers of graphs (Theorem~\ref{thm:power}), including trees (Theorems~\ref{thm:tree3} and~\ref{thm:sub}), proving in particular that a sufficiently large power of any connected bipartite graph is stackable (Theorem~\ref{thm:high-power})

Lastly, we construct graphs that are not $t$-stackable for any target vertex~$t$, which we call \defi{strongly non-stackable}.
In particular, we obtain strongly non-stackable graphs with arbitrarily large minimum degree and connectivity (Theorems~\ref{thm:nonstack-mindeg} and~\ref{thm:nonstack-conn}, respectively).
We also construct families of graphs that are stackable, but which can be made (strongly) non-stackable by adding edges, showing that stackability is not monotone (Theorems~\ref{thm:nonmono} and~\ref{thm:strong-nonmono}).
This demonstrates that our result on stackability of graphs with a Hamilton path is not a trivial consequence of the fact that paths are stackable.

\subsection{Related games and rules}

In our version of the game, a move of $r$ cups from a vertex~$x$ to another vertex~$y$ requires that there is a shortest path~$P(x,y)$ of length~$r$ from~$x$ to~$y$ in~$G$.
This is the reason for the non-monotonicity under adding edges, as adding an edge may change shortest paths.
A similar game was studied by Veselovac~\cite{veselovac:2022}, using slightly different terminology (instead of cups being stacked, he considers frogs that jump on each other), where `shortest path $P(x,y)$' is replaced by `path $P(x,y)$', i.e., cups (or frogs) can move along non-geodesic paths\footnote{The English version of the abstract of his thesis says `shortest paths', but this is a mistranslation of the original Croatian version. Definition 1.2 in the thesis clearly requires `paths' only, not `shortest paths'.}, a variant of the game that was also mentioned in~\cite{MR4749373} for further study.
Of course, if $G$ is stackable in the geodesic game, then $G$ is also stackable in the non-geodesic game.
Furthermore, if $G$ is a tree both variants are identical, as any path in a tree is also a shortest path.
In fact, most results in Veselovac's thesis are about trees.
Specifically, he shows that paths, stars, and so-called starfishes and dandelions are stackable.
He also proves that the tree~$T$ obtained from two stars with at least 3 rays each by joining their centers with an edge is not $t$-stackable for any vertex~$t$.
The non-geodesic game behaves monotonically with respect to adding edges, namely if we have a stackable graph, then any graph obtained by adding edges is also stackable.
In particular, since the path is stackable, any graph that admits a Hamilton path is also stackable.
For the rest of this paper, we do not consider this variant of the game, but instead we focus on the game introduced by Fay, Hurlbert and Tennant~\cite{MR4749373} where cups have to be moved along geodesic (i.e., shortest) paths.

Mitchell~\cite{mitchell:2023} considers yet another variant, where `shortest path~$P(x,y)$' is replaced by `walk~$P(x,y)$', i.e., vertices and edges used during one move may repeat.
As pointed out in~\cite{MR4749373}, other variants are of course possible, such as `trail~$P(x,y)$', i.e., vertices may repeat, but not edges.

Woll~\cite{woll:17} has investigated the game on the path where in the initial configuration each vertex has either~0 or~1 cups.

\subsection{Outline of this paper}

In Section~\ref{sec:ham} we prove that the existence of a Hamilton path implies stackability.
In Section~\ref{sec:bip} we present the chunking lemma and various applications of it to bipartite graphs and their powers (that do not admit Hamilton paths).
In Section~\ref{sec:nonstack} we construct graphs that are not $t$-stackable for any of target vertex~$t$.
In Section~\ref{sec:nonmono} we show that stackability is not monotone under adding edges.
We conclude with some open questions in Section~\ref{sec:open}.

\section{Graphs that admit a Hamilton path}
\label{sec:ham}

A \defi{Hamilton path} in a graph is a path that visits every vertex exactly once.

\begin{theorem}
\label{thm:ham}
Any graph that admits a Hamilton path is stackable.
\end{theorem}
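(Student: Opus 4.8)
The plan is to prove, by induction on the number of vertices~$n$, the slightly more convenient reformulation: if $G$ has a Hamilton path $v_1,\dots,v_n$, then $G$ is $v_k$-stackable for \emph{every} target~$v_k$. One genuinely needs this ``for every target'' strengthening, since a graph with a Hamilton path need not have one ending at a prescribed vertex. The cases $n\le 2$ are trivial, so fix $n\ge 3$ and a target~$v_k$. No move in the strategy below ever takes cups away from~$v_k$, so~$v_k$ stays non-empty throughout; it therefore suffices to repeatedly produce a set~$S$ of still-untouched vertices and a vertex $w\in S$ such that (i) the $|S|$ cups currently on~$S$ can be collected onto~$w$ using only moves among the vertices of~$S$, and (ii) $d_G(w,v_k)=|S|$, so the resulting pile of size~$|S|$ at~$w$ can be shipped onto~$v_k$ in a single move. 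Iterating this until no untouched vertex remains completes the induction.

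The engine for step~(i) is the induction hypothesis, applied to sub-intervals of the Hamilton path. Call a sub-interval $C=\{v_p,\dots,v_q\}$ \emph{good} if it is isometrically embedded in~$G$, that is, $d_{G[C]}(v_i,v_j)=d_G(v_i,v_j)$ for all $i,j\in\{p,\dots,q\}$. If~$C$ is good and proper, then $G[C]$ is a graph on fewer than~$n$ vertices with Hamilton path $v_p\cdots v_q$, so by induction it is $w$-stackable for every $w\in C$; since~$C$ is isometric, the same move sequence is legal in~$G$ and stays inside~$C$. Hence the cups on a good interval can be gathered onto any chosen vertex of it. Two observations supply good intervals in abundance: every sub-interval of size at most~$3$ is good (for a size-$3$ interval the only distance to check is between its two endpoints, and that is~$1$ or~$2$ in both $G$ and $G[C]$, consistently), and every sub-interval that happens to be a geodesic of~$G$ is good.

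It remains to carve each of the two arms $v_1,\dots,v_{k-1}$ and $v_{k+1},\dots,v_n$ into good intervals $C_1,C_2,\dots$, each containing a vertex~$w_j$ with $d_G(w_j,v_k)=|C_j|$; the intervals are then handled one at a time, in any order (gather~$C_j$ onto~$w_j$, then move the pile onto~$v_k$), because~$v_k$ never empties. Processing an arm from its end adjacent to~$v_k$ outward, one peels off such intervals greedily. The structural fact driving this is that $i\mapsto d_G(v_i,v_k)$ changes by at most~$1$ between consecutive path vertices, so the distance profile along each arm is a staircase with no jumps; together with the stock of good intervals above, this should always allow the next interval to be chosen with length equal to the $v_k$-distance of one of its members. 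I expect the carving step to be the main obstacle and the technical heart of the proof: one must meet the isometry requirement (needed for legality of the sub-instance's moves in~$G$) and the length-equals-distance requirement simultaneously, even when the Hamilton path is far from geodesic. Handling plateaus of vertices all equidistant from~$v_k$, and chords lying inside a candidate interval, is where the care is needed, and the $\pm 1$ behaviour of the distance profile — perhaps combined with not always taking the shortest available interval — is what I would use to push it through.
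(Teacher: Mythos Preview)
Your proposal has the right overall shape but contains a genuine gap, and the gap stems from an unnecessary detour. You set up the induction on~$|V(G)|$ and apply the hypothesis to induced subgraphs~$G[C]$; this forces you to demand that each chunk~$C$ be \emph{isometric} in~$G$, since otherwise a legal move in~$G[C]$ need not be legal in~$G$. You then honestly flag the carving step---partitioning each arm into isometric intervals, each of size equal to the $v_k$-distance of one of its members---as ``the main obstacle and the technical heart of the proof'', and leave it undone. That step is not obviously feasible: beyond size~$3$ and beyond geodesic sub-intervals you have no supply of good intervals, and there is no evident reason why the required sizes can always be realised isometrically.

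The paper avoids this obstacle entirely by a small but decisive change of viewpoint: instead of inducting on~$|V(G)|$, it fixes~$G$ once and for all and inducts on the length~$\ell$ of a subpath $P=(x_1,\dots,x_\ell)$ of~$G$, proving that the cups on~$P$ can be stacked onto any~$x_t$. All distances are computed in the ambient graph~$G$ throughout, so no isometry is ever needed. The inductive step is then a one-line split: set $s:=d_G(x_1,x_t)$, note $s\le t-1$ (there is a path of length $t-1$ along~$P$), so $x_t$ lies in $P''=(x_{s+1},\dots,x_\ell)$; stack $P'=(x_1,\dots,x_s)$ onto~$x_1$ and $P''$ onto~$x_t$ by induction, then ship the $s$ cups from~$x_1$ to~$x_t$ since $d_G(x_1,x_t)=s$. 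Your ``carving'' collapses to this single split, and the isometry requirement evaporates. In short: reformulate your induction hypothesis as a statement about subpaths of a fixed~$G$ rather than about smaller graphs, and the proof becomes four lines.
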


This substantially enlarges the catalogue of known stackable graphs by all graphs that are known to have a Hamilton path.
The following corollary lists only those explicity conjectured or mentioned in~\cite{MR4749373}.
To state the result, we write $P_\ell$ for the path with $\ell$ vertices.
Furthermore, the \defi{Cartesian product} $G\Cprod H$ of two graphs $G=(V,E)$ and $H=(W,F)$ has the vertex set~$V\times W$ and and an edge $((v,w),(v',w'))$ whenever $v=v'$ and~$(w,w')\in F$ or $(v,v')\in E$ and~$w=w'$.
The \defi{$d$-dimensional hypercube~$Q_d$} is the graph~$\Cprod_{i=1}^d P_2$.
It is easy to show by induction that Cartesian products of paths of arbitrary lengths, in particular $Q_d$, admit a Hamilton path (see~\eqref{eq:HP1p} below).
For integers~$k\geq 1$ and~$n\geq 2k+1$, the \defi{Kneser graph~$K(n,k)$} has as vertices all $k$-element subsets of~$\{1,\ldots,n\}$, and edges between disjoint sets.
The \defi{generalized Johnson graph~$J(n,k,s)$} has as vertices all $k$-element subsets of~$\{1,\ldots,n\}$, and an edge between any two sets~$A$ and~$B$ that satisfy~$|A\cap B|=s$.
To ensure that the graph is connected, we assume that $s<k$ and $n\geq 2k-s+\mathbf{1}_{[s=0]}$, where $\mathbf{1}_{[s=0]}$ denotes the indicator function that equals~1 if $s=0$ and~0 otherwise.
Clearly, we have $J(n,k,0)=K(n,k)$.
Kneser graphs and generalized Johnson graphs were recently shown to have a Hamilton cycle in~\cite{MR4617441}, with the only exception of the Petersen graph~$K(5,2)=J(5,2,0)=J(5,3,1)$, which has a Hamilton path, but not cycle.

\begin{corollary}
\label{cor:impl}
The following families of graphs are stackable:
\begin{itemize}[itemsep=0ex,parsep=0.5ex,leftmargin=4ex]
\item $d$-dimensional hypercubes~$Q_d$ for all $d\geq 1$;
\item Cartesian product of paths $P_{\ell_1}\Cprod P_{\ell_2}\Cprod \cdots \Cprod P_{\ell_d}$ for all $d\geq 1$ and all integers~$\ell_1,\ldots,\ell_d\geq 1$;
\item Kneser graphs~$K(n,k)$ for all $k\geq 1$ and $n\geq 2k+1$;
\item generalized Johnson graphs~$J(n,k,s)$ for all $k\geq 1$, $0\leq s<k$, and $n\geq 2k-s+\mathbf{1}_{[s=0]}$.
\end{itemize}
\end{corollary}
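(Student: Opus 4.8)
The plan is to establish a more flexible statement about stacking the cups that lie on a path of consecutive vertices onto one of its endpoints \emph{inside an arbitrary ambient graph}, and then to apply it twice. Precisely, I would prove the following by induction on $m$: if $w_1,\dots,w_m$ are distinct vertices of a graph $G$ with $w_iw_{i+1}\in E(G)$ for $1\le i<m$, and the current configuration places $c\ge 1$ cups on $w_1$ and exactly one cup on each of $w_2,\dots,w_m$ (the other vertices of $G$ being arbitrary and irrelevant), then there is a sequence of valid moves, each with both endpoints in $\{w_1,\dots,w_m\}$, after which $w_1$ carries $c+m-1$ cups, the vertices $w_2,\dots,w_m$ are empty, and every other vertex is unchanged. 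Granting this, Theorem~\ref{thm:ham} follows at once: writing the Hamilton path as $v_1\cdots v_n$ and the target as $t=v_k$, first apply the statement to the path $v_k,v_{k-1},\dots,v_1$ with $c=1$, which gathers the cups of $v_1,\dots,v_k$ onto $v_k$ while never touching $v_{k+1},\dots,v_n$, and then apply it again to the path $v_k,v_{k+1},\dots,v_n$, now with $c=k$, which moves all $n$ cups onto $v_k$. Corollary~\ref{cor:impl} is then immediate, since hypercubes and Cartesian products of paths admit Hamilton paths by an easy induction, and Kneser and generalized Johnson graphs do as well by~\cite{MR4617441} (the one exceptional Petersen graph still having a Hamilton path).

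For the inductive step the decisive idea is to \emph{read off} the length we need instead of prescribing it; this is exactly what defuses the geodesic constraint, which is the only thing that looks like an obstacle in problems of this kind. Let $m\ge 2$ and put $r:=d_G(w_1,w_m)$. Because $w_1\cdots w_m$ is a path in $G$ we have $1\le r\le m-1$, no matter which chords or outside vertices $G$ may contain. Apply the induction hypothesis to the reversed subpath $w_m,w_{m-1},\dots,w_{m-r+1}$ to build a pile of exactly $r$ cups on $w_m$; then the single move sending this pile from $w_m$ to $w_1$ is legal, since its length $r$ equals $d_G(w_m,w_1)$ by the choice of $r$ and $w_1$ still carries at least one cup. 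Now $w_1$ holds $c+r$ cups, each of $w_2,\dots,w_{m-r}$ holds one cup, and $w_{m-r+1},\dots,w_m$ are empty, so one last application of the induction hypothesis to the path $w_1,\dots,w_{m-r}$ completes the argument. Both recursive calls are on strictly shorter paths, of lengths $r$ and $m-r$ with $1\le r\le m-1$, so the induction is well-founded; it stays within $\{w_1,\dots,w_m\}$ throughout, and altogether it performs exactly $m-1$ moves, as it must. The base case $m=1$ is vacuous.

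I do not anticipate a genuine difficulty beyond keeping the bookkeeping honest: at each recursive call one checks that the hypotheses are met---in particular that the receiving endpoint always retains a cup, which is automatic---and that confining all moves to the vertices of the current subpath is preserved, so that in the main proof the first phase (gathering $v_1,\dots,v_{k-1}$ onto $v_k$) and the second phase (gathering $v_{k+1},\dots,v_n$ onto $v_k$) genuinely do not interfere. The one point worth stating explicitly is why the adaptively sized pile on $w_m$ can always be assembled, and that is precisely the smaller instance of the statement being proved, so the construction is self-sustaining; this also explains why a naive fixed ``binary doubling'' strategy along the path would fail (a chord can make a planned length-$2^i$ jump invalid) while the distance-adaptive version does not.
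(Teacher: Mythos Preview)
Your proposal is correct and uses essentially the same idea as the paper's Lemma~\ref{lem:path}: set the pile size adaptively equal to the actual graph distance and recurse on the two resulting subpaths. The only cosmetic difference is that the paper's lemma stacks onto an \emph{arbitrary} vertex~$x_t$ of the subpath in one shot (splitting at $s=d_G(x_1,x_t)$ rather than your $r=d_G(w_1,w_m)$), so a single application to the Hamilton path suffices, whereas you stack onto an endpoint and then invoke your lemma twice to reach an interior target; the deduction of the corollary from the cited Hamiltonicity results is identical.
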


Theorem~\ref{thm:ham} is an immediate consequence of the following lemma.

\begin{lemma}
\label{lem:path}
Let $G$ be a graph and $P=(x_1,\ldots,x_\ell)$ a subpath of~$G$.
Then for any~$t\in\{1,\ldots,\ell\}$, we can stack the $\ell$ cups from~$P$ onto the vertex~$x_t$.
\end{lemma}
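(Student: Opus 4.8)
The goal is to stack all $\ell$ cups of a subpath $P=(x_1,\ldots,x_\ell)$ of $G$ onto an arbitrary vertex $x_t$, where distances are measured in $G$; since $P$ is a subpath, the $G$-distance between $x_i$ and $x_j$ is at most $|i-j|$, but possibly smaller, so the moves we use must be along geodesics of $G$. The natural strategy is to reduce everything to the case $G=P_\ell$ (a pure path), for which the problem is already known to be solvable by the results of Fay, Hurlbert and Tennant cited in the introduction. The subtlety is that a move of $r$ cups from $x_i$ to $x_j$ requires $\mathrm{dist}_G(x_i,x_j)=r$ exactly, not just $\mathrm{dist}_G(x_i,x_j)\le r$, so we cannot blindly import a path-stacking strategy. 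The plan is therefore to give a self-contained recursive argument that only ever performs moves between \emph{consecutive blocks} of $P$, where the distance in $G$ equals the number of cups being moved.

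\textbf{Key construction.}
The plan is to prove the statement by strong induction on $\ell$. For the base cases $\ell=1,2$ the claim is trivial (one vertex; or two adjacent vertices, and we move the single cup from the non-target onto the target). For the inductive step, I would first handle the target being an endpoint, say $t=1$: split $P$ into a front block and a back block whose sizes are chosen so that the back block can be collapsed onto its own left endpoint and then that endpoint, now carrying a stack whose size equals its $P$-distance (hence an upper bound for its $G$-distance) to a suitable vertex of the front block, can unload. Concretely, I would aim to show the following \emph{doubling} behaviour: if we can assemble a stack of size $s$ at some vertex $x_i$ of $P$ and there is another stack reachable at $P$-distance exactly $s$, we merge them; iterating, a block of $2^k$ consecutive vertices can be collapsed to a single stack of size $2^k$ at one end using only moves along $P$ (so the required distances are realized in $G$ as well, because consecutive sub-blocks of $P$ of equal size $s$ are at $G$-distance exactly... wait—this is precisely the delicate point). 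To make the distances come out right I would instead collapse blocks from the \emph{inside out}: pair up $x_{2j-1}$ with $x_{2j}$ (distance $1$), then pair up the resulting size-$2$ stacks sitting at $x_1,x_3,x_5,\ldots$, which are at $P$-distance $2$; but their $G$-distance could be $1$. To circumvent this, the cleanest route is to move cups only in one direction along $P$: repeatedly take the current leftmost occupied vertex $x_a$ carrying a stack of size $m$ and move it to $x_{a+m}$ — but again $\mathrm{dist}_G(x_a,x_{a+m})$ might be less than $m$.

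\textbf{Resolving the distance issue — the main obstacle.}
The heart of the proof, and the step I expect to be the main obstacle, is ensuring that whenever we want to move a stack of $r$ cups the \emph{exact} $G$-distance $r$ is available. The key observation to exploit is that $P$ itself is a subpath of $G$, so for every pair $i<j$ we have a walk of length $j-i$ in $G$, and hence $\mathrm{dist}_G(x_i,x_j)\le j-i$ with equality possibly failing. The fix is to never commit in advance to \emph{which} vertices receive which stacks: instead, given that a stack of size $r$ currently sits at $x_i$, look at $\mathrm{dist}_G(x_i,x_j)$ for the intended recipient $x_j$ with $j-i=r$; if it equals $r$ we are done, and if it is some $r'<r$, then there is a geodesic of $G$ of length $r'$ from $x_i$ to $x_j$, so in particular $\mathrm{dist}_G(x_i, x_{i+r'})\le r'$ as well — one shows by an extremal/minimality argument (choosing $j$ as small as possible with a short geodesic, or induction on $\ell$ applied to a strictly shorter subpath) that we may instead reroute and collapse a \emph{shorter} initial segment of $P$, to which the induction hypothesis applies directly, and then recurse. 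Thus the induction is really on $\ell$ together with a case distinction: either all the required $P$-distances are realized in $G$ (and the pure-path strategy transfers verbatim), or some required distance "shortcuts", in which case we have found a proper subpath on which to invoke the induction hypothesis, reducing $\ell$. Finally, for a general interior target $x_t$ with $1<t<\ell$, I would split $P$ at $x_t$ into two subpaths $P'=(x_1,\ldots,x_t)$ and $P''=(x_t,\ldots,x_\ell)$, use the endpoint case to collapse each onto $x_t$ — but since both subpaths share the vertex $x_t$ we must be slightly careful about the order, doing the longer side first so that $x_t$ always has a cup when it needs to receive one. Wrapping these cases together and verifying the bookkeeping (that every intermediate recipient vertex is nonempty and every distance is exact) is the routine but lengthy part; the genuinely non-obvious ingredient is the shortcut/reroute argument that tames the gap between $P$-distance and $G$-distance.
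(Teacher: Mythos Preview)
Your proposal correctly identifies the central difficulty---that moves in $G$ require the stack size to equal the \emph{exact} $G$-distance, which may be strictly smaller than the $P$-distance---but it never actually resolves it. The ``shortcut/reroute'' paragraph is a promissory note, not an argument: you assert that ``one shows by an extremal/minimality argument \ldots\ that we may instead reroute and collapse a shorter initial segment'', but no such argument is given, and the sketches preceding it (doubling, pairing, leftmost-sweep) all run into exactly the obstacle you flag. Moreover, your reduction of the interior-target case to the endpoint case by splitting $P$ at $x_t$ into two overlapping subpaths is awkward (the shared vertex $x_t$ breaks the ``one cup per vertex'' hypothesis for the second subpath), and in any event it presupposes the endpoint case, which is precisely the unresolved part.

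The paper's proof sidesteps the whole issue with a single clean idea: do \emph{not} choose the split point by $P$-distance and then worry about whether the $G$-distance matches; instead, choose the split point by $G$-distance from the outset. Assuming $t\ge 2$ (else reverse $P$), set $s:=d_G(x_1,x_t)$ and split $P$ into $P'=(x_1,\ldots,x_s)$ and $P''=(x_{s+1},\ldots,x_\ell)$. Since $P$ provides a walk of length $t-1$ from $x_1$ to $x_t$, we have $1\le s\le t-1$, so both pieces are nonempty, strictly shorter than $P$, and $x_t\in P''$. By induction, collapse $P'$ onto $x_1$ (giving a stack of size exactly $s$ there) and collapse $P''$ onto $x_t$. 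The final move of the $s$ cups from $x_1$ to $x_t$ is legal \emph{by the definition of $s$}. No case analysis, no rerouting, no distinction between endpoint and interior targets is needed.
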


We write $d(x,y)$ for the distance between vertices~$x$ and~$y$ in~$G$.

\begin{proof}
We argue by induction on~$\ell$.
The induction basis~$\ell=1$ is trivial.
For the induction step let~$\ell\geq 2$.
Our task is to stack the $\ell$ cups from $P=(x_1,\ldots,x_\ell)$ onto the target vertex~$x_t$.
We assume w.l.o.g.\ that $t\geq 2$, otherwise we can do the argument with the reversed path.
We define $s:=d(x_1,x_t)$, and we split $P$ into two paths~$P':=(x_1,\ldots,x_s)$ and~$P'':=(x_{s+1},\ldots,x_\ell)$.
Note that $s\leq t-1$ and therefore $x_t\in P''$.
By induction, we can stack the $s$ cups from~$P'$ onto~$x_1$.
Similarly, by induction, we can stack the $\ell-s$ cups from~$P''$ onto~$x_t$.
In the last step, because of~$d(x_1,x_t)=s$, we can move the $s$ cups from~$x_1$ to~$x_t$.
This completes the proof.
\end{proof}

Note that this proof does not require that~$P$ is an isometric path in~$G$, i.e., the distances between vertices on~$P$ along the path need not be the same as the distances between those vertices in the whole graph~$G$.

By Theorem~\ref{thm:ham}, a graph is stackable if it has a Hamilton path.
The converse implication, however, is not true in general.
With the help of a computer, we found the smallest graphs that are stackable but do not admit Hamilton paths.
They have 6~vertices and are all shown in Figure~\ref{fig:nonham}.

\begin{figure}[h]
\includegraphics{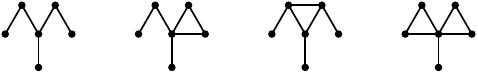}
\caption{All smallest stackable graphs that do not admit Hamilton paths.}
\label{fig:nonham}
\end{figure}

\section{Bipartite graphs}
\label{sec:bip}

In this section we establish stackability of various bipartite graphs, in particular graph powers, that do not have Hamilton paths; see Remark~\ref{rem:diff} below.

We will use the following important property of bipartite graphs, stated without proof.

\begin{lemma}
\label{lem:dpm1}
Let $G$ be a bipartite graph, let $t$ be one of its vertices, and let $xy$ be an edge of~$G$.
Then we have $d(x,t)=d(y,t)\pm 1$.
\end{lemma}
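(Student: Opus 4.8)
The plan is to combine the triangle inequality with the parity constraint that bipartiteness imposes on distances. Since $xy$ is an edge, we have $d(x,y)=1$, so the triangle inequality yields $|d(x,t)-d(y,t)|\le d(x,y)=1$; hence $d(x,t)-d(y,t)\in\{-1,0,1\}$. It therefore suffices to exclude the case $d(x,t)=d(y,t)$, as then $d(x,t)=d(y,t)\pm 1$ follows.

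To rule out equality, suppose for contradiction that $d(x,t)=d(y,t)=k$ for some integer $k\ge 0$. Fix a shortest $x$--$t$ path of length $k$ and a shortest $y$--$t$ path of length $k$. Traversing the first path from $x$ to $t$, then the second path backwards from $t$ to $y$, and finally the edge $yx$, produces a closed walk of length $2k+1$, which is odd. But in a bipartite graph with parts $A$ and $B$, every closed walk alternates between $A$ and $B$ and hence has even length, a contradiction. So $d(x,t)\neq d(y,t)$, which completes the argument.

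An equivalent and even more direct route uses the bipartition itself: if $G$ has bipartition $(A,B)$ with $t\in A$, then every vertex at even distance from $t$ lies in $A$ and every vertex at odd distance from $t$ lies in $B$, by a trivial induction on the distance using that all edges run between $A$ and $B$. Since $x$ and $y$ are adjacent, they lie in different parts, so exactly one of $d(x,t)$ and $d(y,t)$ is even; in particular they are unequal, and the triangle inequality bound above then forces $d(x,t)=d(y,t)\pm 1$.

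I do not expect a genuine obstacle here, as the statement is a standard consequence of bipartiteness. The only point needing a moment of care is justifying that $d(x,t)$ and $d(y,t)$ cannot coincide, for which either the odd-closed-walk argument or the parity-of-distance argument suffices; the rest is just the triangle inequality.
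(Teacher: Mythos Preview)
Your proof is correct and entirely standard. The paper itself does not prove this lemma at all---it is explicitly ``stated without proof''---so there is no approach to compare against; your triangle-inequality-plus-parity argument is exactly the kind of routine verification the authors had in mind when omitting it.
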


\subsection{Chunking lemma}

For integers~$a\leq b$ we define $[a,b]:=\{a,a+1,\ldots,b\}$.
We also write~$[N]:=[1,N]=\{1,\ldots,N\}$.
Given a sequence $x_1,\ldots,x_N$ of integers, a \defi{chunk} is a consecutive subsequence $x_i,x_{i+1},\ldots,x_j$ for $1\leq i\leq j\leq N$, and we refer to the quantity~$j-i+1$ as its \defi{length}.
Such a chunk is \defi{proper} if the value of one of its element equals its length, i.e., $x_k=j-i+1$ for some $k\in[i,j]$.
For example, the sequence $6,5,3,4,5,6,4$ can be partitioned into the chunks~$6,5,3$ and~$4,5,6,4$, both of which are proper, and this is in fact the only way to split this sequence into proper chunks.

\begin{lemma}
\label{lem:chunk}
Let $x_1,\ldots,x_N$ be a sequence of positive natural numbers such that $(x_1,x_2)\neq (2,1)$ and $x_{i+1}=x_i\pm 1$ for $i=1,\ldots,N-1$.
If $N\geq (\max_{i=1}^N x_i)^2$, then the sequence can be partitioned into proper chunks.
\end{lemma}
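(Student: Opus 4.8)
The plan is to argue by strong induction on~$N$, the engine being the following elementary observation: since consecutive terms differ by exactly~$1$, the set of values taken by any contiguous subsequence $x_i,x_{i+1},\ldots,x_j$ is the full integer interval $[\min_{i\le k\le j}x_k,\ \max_{i\le k\le j}x_k]$. Hence a prefix chunk $x_1,\ldots,x_\ell$ is proper precisely when $\min_{k\le\ell}x_k\le\ell\le\max_{k\le\ell}x_k$, and in particular the prefix $x_1,\ldots,x_{x_1}$ is \emph{always} a proper chunk, since it has length~$x_1$ and contains the value~$x_1$ in its first position (and it fits inside the sequence because $x_1\le\max_i x_i\le\sqrt N\le N$). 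The reduction I would then set up is: it suffices to chop off one proper chunk -- either a prefix $x_1,\ldots,x_\ell$ or, by the reversal symmetry of the notion of a proper chunk, a suffix $x_{N-\ell+1},\ldots,x_N$ -- in such a way that the remaining sequence still satisfies the hypotheses of the lemma (or of a suitable strengthening), and then invoke the induction hypothesis. The most naive choice, chopping the shortest proper prefix, works for ``generic'' sequences but not always.

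The hard part -- and the reason the hypotheses are shaped the way they are -- is that the condition $N\ge(\max_i x_i)^2$ need not survive a single chop when the maximum is still attained by the remaining sequence, so the induction cannot literally carry the statement as written. I would deal with this by (i)~strengthening the inductive statement so that, besides sequences with $N\ge(\max_i x_i)^2$, it also covers an explicit short list of ``well-structured'' shorter sequences -- roughly, those that near one end use only two consecutive values $m,m+1$ -- which remain partitionable; and (ii)~running a case analysis on the first two or three terms $x_1,x_2,x_3$ (and, when the whole sequence uses only two values $m,m+1$, on the residue of~$N$) in order to choose, among the proper chunks available at the two ends, one whose removal lands the remainder inside this strengthened family. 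The excluded start $(x_1,x_2)=(2,1)$ is exactly what rules out the genuinely stuck tails $2,1,2,1,\ldots,2$ of odd length, while the quadratic threshold is essentially optimal: for the alternation $m,m+1,m,m+1,\ldots$ the only admissible chunk lengths are $m$ and $m+1$, so partitionability amounts to $N$ lying in the numerical semigroup generated by $m$ and $m+1$, whose Frobenius number $m^2-m-1$ is quadratic in $m=\max_i x_i-1$.

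I expect step~(ii) -- verifying that in every configuration, including the awkward ones near the two ends where only few values are on offer, there is a proper chunk at one of the ends whose removal preserves the strengthened invariant -- to be the main obstacle and the part demanding the most careful bookkeeping.
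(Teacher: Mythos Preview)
Your proposal outlines a plausible strategy but leaves the crux undone. You correctly diagnose that chopping off a proper prefix need not preserve either $N\ge C^2$ or the condition $(x_1,x_2)\neq(2,1)$ on the remainder (for instance, removing the canonical prefix $x_1,\ldots,x_{x_1}$ from a sequence that descends to~$1$ and then oscillates can leave a tail starting $(2,1)$), and you propose to fix this by strengthening the inductive hypothesis to cover an ``explicit short list'' of additional well-structured sequences. But you never specify that list, and you explicitly defer the case analysis in step~(ii), which you yourself call the main obstacle. Without the strengthened invariant written down and the end-configurations checked against it, there is no proof here, only a plan whose hard part is precisely the part omitted.

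The paper's argument avoids this inductive strengthening altogether by working forward rather than peeling backward. It calls a position~$i$ a \emph{cut} if the prefix $x_1,\ldots,x_i$ admits a proper-chunk partition, observes that the hypothesis $(x_1,x_2)\neq(2,1)$ yields two consecutive cuts $\min\{x_1,x_2\},\max\{x_1,x_2\}$, and then proves via a single concrete case analysis that any run of $s\ge 2$ consecutive cuts can be extended to a run of $s+1$ consecutive cuts within the next $C+1$ positions. Once a run of length~$C=\max_i x_i$ is reached, every subsequent position is trivially a cut, and $N\ge C^2>(C-1)(C+1)$ guarantees this happens before position~$N$. The invariant ``length of the current run of consecutive cuts'' is what replaces your unspecified strengthening; it is the key idea, and it does not surface naturally from the chop-and-induct viewpoint.
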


The assumption $(x_1,x_2)\neq (2,1)$ is needed, because the sequence $2,1,2,1,\ldots,2,1,2$ cannot be partitioned into proper chunks.

We will apply Lemma~\ref{lem:chunk} to partition a path~$P$ with $N$ vertices in a bipartite graph~$G$ into subpaths.
The number~$x_i$ is the distance of the $i$th vertex on~$P$ to some fixed target vertex~$t$ not on the path, and by Lemma~\ref{lem:dpm1} these numbers alternate by $\pm 1$ by the assumption that $G$ is bipartite.
The proper chunks correspond to subpaths of~$P$ with the property that the number of vertices on the subpath equals the distance of one vertex~$s$ on the subpath to~$t$.
We can then stack all cups from the subpath onto~$s$ by Lemma~\ref{lem:path}, and from there with one additional move onto~$t$.
The crucial condition stated in the lemma for this to work is that $N$ is at least quadratic in the largest of the distances~$x_i$.

\begin{proof}
We say that $i\in[N]$ is a \defi{cut} if $x_1,\ldots,x_i$ can be partitioned into proper chunks.
Our goal is to prove that~$N$ is a cut.

By the assumption that $(x_1,x_2)\neq (2,1)$ and~$x_2=x_1\pm 1$, there are two consecutive cuts $\min\{x_1,x_2\},\max\{x_1,x_2\}$.

\begin{figure}
\includegraphics{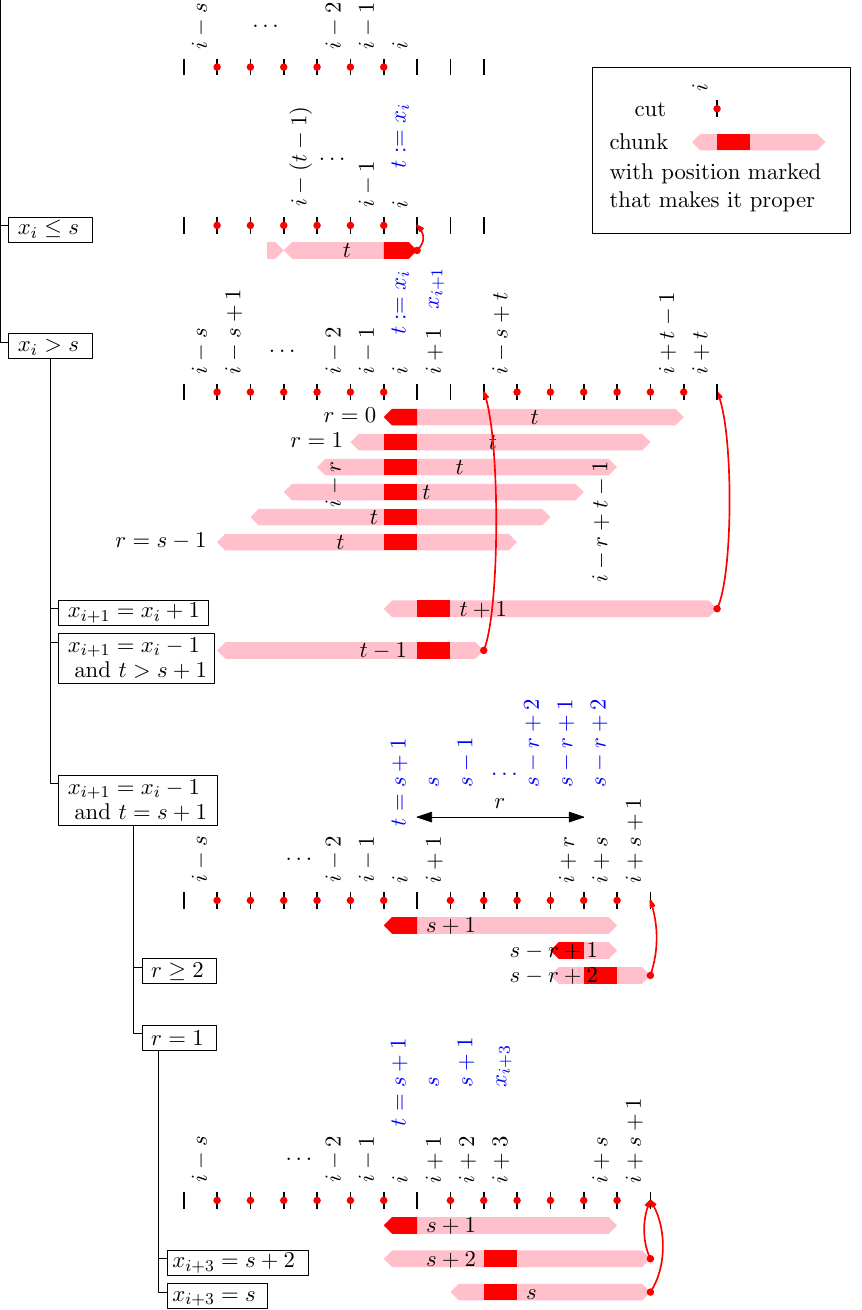}
\caption{Illustration of the proof of Lemma~\ref{lem:chunk}.}
\label{fig:chunk}
\end{figure}

Consider a sequence of $s\geq 2$ consecutive cuts $[i-s,i-1]$.
The different cases in the following proof are illustrated in Figure~\ref{fig:chunk}.
We distinguish two main cases:
If $x_i\leq s$, then we define $t:=x_i$ and we have the proper chunk $x_{i-(t-1)},\ldots,x_i$ of length~$t$, implying that $i$ is a cut, i.e., we have found $s+1$ consecutive cuts $[i-s,i]$.
It remains to consider the case $x_i>s$.
We define $t:=x_i$ and consider the proper chunks $x_{i-r},\ldots,x_{i-r+t-1}$, each of length~$t$, for $r=0,\ldots,s-1$, i.e., we have found $s$ consecutive cuts $[i-s+t,i+t-1]$.
If $x_{i+1}=x_i+1$, then we have the proper chunk $x_i,\ldots,x_{i+t}$ of length $t+1=x_{i+1}$, and thus another cut~$i+t$.
If $x_{i+1}=x_i-1$ and $t>s+1$, then we have the proper chunk $x_{i-s+1},\ldots,x_{i-s+t-1}$ (note that $i-s+t-1\geq i+1$ by the additional assumption $t>s+1$) of length~$t-1=x_{i+1}$, and thus another cut $i-s+t-1$.
Otherwise we have $x_{i+1}=x_i-1$ and $t=s+1$.
Consider the smallest $r\geq 1$ such that $x_{i+1+r}=x_{i+r}+1$.
We then have $x_i,x_{i+1},\ldots,x_{i+1+r}=s+1,s,s-1,\ldots,s-r+2,s-r+1,s-r+2$ and therefore~$r\leq s$.
We distinguish the subcases~$r=1$ and~$r\geq 2$.
If $r\geq 2$ we consider the proper chunk $x_{i+r},\ldots,x_{i+s+1}$ (note that $i+2\leq i+r\leq i+s$ by the additional assumption $r\geq 2$) of length $s-r+2=x_{i+1+r}$, which gives another cut $i+s+1=i+t$.
It remains to consider the case $r=1$, i.e., $x_{i+2}=x_i=x_{i+1}+1=s+1$.
If $x_{i+3}=x_{i+2}+1=s+2$, then consider the proper chunk $x_i,\ldots,x_{i+s+1}$ of length~$s+2=x_{i+3}$, which gives another cut $i+s+1=i+t$.
If $x_{i+3}=x_{i+2}-1=s$, then consider the proper chunk $x_{i+2},\ldots,x_{i+s+1}$ of length~$s=x_{i+3}$, which gives another cut $i+s+1=i+t$.

We define $C:=\max_{i=1}^N x_i$.
By the arguments from before, we see at least 2 consecutive cuts in the interval~$[1,C]$.
Furthermore, if there are $s\geq 2$ consecutive cuts in the interval~$[1,i]$, then there are at least $s+1$ consecutive cuts in the interval~$[1,i+C+1]$.
We conclude that there are $C$ consecutive cuts in the interval~$[1,(C-1)\cdot (C+1)]$.
Note that every number greater than this sequence of $C$ consecutive cuts is also a cut, in particular $N\geq C^2>(C+1)(C-1)$ is a cut.
This completes the proof.
\end{proof}

\subsection{Bipartite graphs with small diameter}

We refer to a spanning forest of paths in~$G$ as a \defi{path partition} of~$G$ (the paths need not be induced).
For a path~$P$ in~$G$ between vertices~$x$ and~$y$, we define $T(P):=\{x,y\}$ as the set of terminal vertices of~$P$.
The \defi{diameter} of~$G$ is the maximum of~$d(x,y)$ over all pairs of vertices $x,y$ in~$G$.

\begin{theorem}
\label{thm:bip}
Let $G$ be a connected bipartite graph with diameter~$d$ and let $t$ be one of its vertices.
If $G$ has a partition into $p\geq 2$ paths~$P_1,\ldots,P_p$ such that $t\in P_1$ and $P_j$ has at least~$d^2$ vertices and $d(T(P_j))\notin\{2,4\}$ for all $j\in[2,p]$, then $G$ is $t$-stackable.
\end{theorem}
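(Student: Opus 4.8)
The plan is to empty the paths $P_1,\dots,P_p$ onto~$t$ one at a time, always moving every cup currently on the path being treated onto~$t$. We may assume $d\ge 2$, since $d=1$ forces $G=K_2$, which is trivially $t$-stackable; then $|P_j|\ge d^2\ge 4$ for every $j\in[2,p]$.

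First I would handle~$P_1$. Since $t\in P_1$ and at this point every vertex still carries exactly one cup, Lemma~\ref{lem:path} lets us stack all $|P_1|$ cups of~$P_1$ onto~$t$. From now on~$t$ carries at least $|P_1|\ge 1$ cups, and it will keep at least one cup forever, since every subsequent move only adds cups to~$t$.

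Next, fix $j\in[2,p]$ and treat~$P_j$. Write $P_j=(v_1,\dots,v_N)$ with $N=|P_j|$ and set $x_i:=d(v_i,t)$. These are positive integers (as $v_i\neq t$), they satisfy $x_{i+1}=x_i\pm1$ by Lemma~\ref{lem:dpm1} (here we use bipartiteness of~$G$), and $\max_i x_i\le d$, so $N\ge d^2\ge(\max_i x_i)^2$. The last hypothesis of Lemma~\ref{lem:chunk} that we need, namely $(x_1,x_2)\neq(2,1)$, can be arranged after possibly reversing~$P_j$: if neither orientation avoided the forbidden pair, then $d(v_1,t)=d(v_N,t)=2$, so both $v_1$ and $v_N$ lie in the same part of the bipartition of~$G$ as~$t$, whence $d(v_1,v_N)$ is even; together with $d(v_1,v_N)\le d(v_1,t)+d(t,v_N)=4$ and $v_1\neq v_N$ this would give $d(T(P_j))=d(v_1,v_N)\in\{2,4\}$, contradicting the hypothesis. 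Now Lemma~\ref{lem:chunk} splits $(x_1,\dots,x_N)$ into proper chunks; each chunk is a subpath $(v_a,\dots,v_b)$ of~$G$ of some length $\ell=b-a+1$ containing a vertex~$v_k$ with $d(v_k,t)=x_k=\ell$. For each chunk we first apply Lemma~\ref{lem:path} to stack its $\ell$ cups onto~$v_k$ --- valid because at that moment all $\ell$ vertices of the chunk still carry exactly one cup and none of them is~$t$ --- and then, since $v_k$ now holds exactly $\ell$ cups, $d(v_k,t)=\ell$, and~$t$ holds a cup, one further move brings all $\ell$ cups from~$v_k$ to~$t$. The chunks are vertex-disjoint and avoid the other paths~$P_{j'}$, so these operations do not interfere; after treating~$P_j$, all of its cups lie on~$t$. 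Once all $j\in[2,p]$ have been treated, all $n$ cups are on~$t$, so $G$ is $t$-stackable.

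The only genuine obstacle is the step in which we reduce to Lemma~\ref{lem:chunk}: we must orient each~$P_j$ so that its sequence of distances to~$t$ does not start with $2,1$, and the assumption $d(T(P_j))\notin\{2,4\}$ is exactly, and only, what makes this possible. The rest is bookkeeping of cup counts, which goes through because~$P_1$ is emptied first (so that~$t$ is never empty afterwards) and because every later use of Lemma~\ref{lem:path} is on a fresh subpath avoiding~$t$, hence in precisely the situation that lemma assumes.
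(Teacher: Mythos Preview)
Your proof is correct and follows the same strategy as the paper: stack $P_1$ onto~$t$ via Lemma~\ref{lem:path}, and for each remaining~$P_j$ apply Lemma~\ref{lem:chunk} to the distance sequence and then Lemma~\ref{lem:path} plus one final move per chunk. The only cosmetic difference is in verifying that an orientation with $(x_1,x_2)\neq(2,1)$ exists: the paper argues forward by cases on $d(T(P_j))\in\{1,3\}$ versus $d(T(P_j))\ge 5$, whereas you argue by contradiction that failure of both orientations forces $d(T(P_j))\in\{2,4\}$; these are equivalent.
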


In this theorem and its proof, and also in Theorem~\ref{thm:power} and its proof below, $P_j$, $j\in[p]$, are paths of arbitrary lengths, i.e., the index~$j$ does not refer to the number of vertices.

\begin{proof}
By Lemma~\ref{lem:path}, the cups from $P_1$ can be stacked onto~$t$.
Now consider one of the remaining paths $P_j$, $j\in[2,p]$, and let $x_1,\ldots,x_N$ be the sequence of distances from the vertices along this path to~$t$.
As $t\notin P_j$ we have $x_i\geq 1$ for all $i\in[N]$, and as $G$ is bipartite we have $x_{i+1}=x_i\pm 1$ for all $i\in[N-1]$ by Lemma~\ref{lem:dpm1}.
Furthermore, note that if $d(T(P_j))\geq 5$, then we have $\max\{x_1,x_N\}\geq 3$ and we can assume w.l.o.g.\ that $x_1\geq 3$, in particular $x_1\neq 2$.
On the other hand, if $d(T(P_j))\in\{1,3\}$, then $x_1$ and~$x_N$ have opposite parity by Lemma~\ref{lem:dpm1}, in particular $x_1\neq x_N$, so we can again assume w.l.o.g.\ that $x_1\neq 2$.
The remaining cases $d(T(P_j))\in\{2,4\}$ are excluded by the assumptions in the lemma.
Lastly, as the diameter of $G$ is~$d$ the assumption $N\geq d^2$ ensures that $N\geq (\max_{i=1}^N x_i)^2$.
Therefore, the conditions of Lemma~\ref{lem:chunk} are satisfied, and we obtain that the path~$P_j$ can be split into $q$ subpaths $Q_1,\ldots,Q_q$ such that the number of vertices of~$Q_k$ equals $d(s_k,t)$ for some vertex~$s_k\in Q_k$ for all $k\in[q]$.
We can thus apply Lemma~\ref{lem:path} for each path $Q_k$, $k\in[q]$, to stack the cups from~$Q_k$ onto~$s_k$, and from there with one additional move onto~$t$.
\end{proof}

\begin{figure}[h!]
\makebox[0cm]{ 
\includegraphics{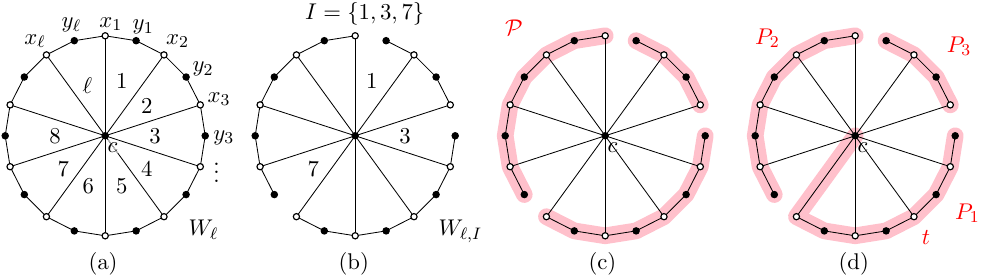}
}
\caption{Illustration of how Theorem~\ref{thm:bip} applies to certain subgraphs of a biwheel.}
\label{fig:biwheel}
\end{figure}

We proceed to give an example of how to apply Theorem~\ref{thm:bip}.
For any integer~$\ell\geq 2$ we define the \defi{biwheel} $W_\ell$ as the graph with vertex set~$\{c\}\cup \{x_1,\ldots,x_\ell\}\cup\{y_1,\ldots,y_\ell\}$ and edge set~$\{(c,x_i)\mid i\in[\ell]\}\cup \{(x_i,y_i),(y_i,x_{i+1})\mid i\in[\ell]\}$, where indices are considered modulo~$\ell$; see Figure~\ref{fig:biwheel}~(a).
For any set~$I\seq [\ell]$ we define $W_{\ell,I}:=W_\ell\setminus\{(x_i,y_i)\mid i\in I\}$, i.e., the edges~$(x_i,y_i)$ for $i\in I$ are removed from the biwheel; see Figure~\ref{fig:biwheel}~(b).
We aim to show that~$W_{\ell,I}$ is stackable provided that for any two integers~$j,j'\in I$ we have $|j-j'|\geq 8$ (modulo~$\ell$).
First note that the graph~$W_{\ell,I}$ is bipartite and has diameter~$d=4$.
Also note that~$W_{\ell,I}$ has $p:=|I|$ vertices of degree~1, so it does not have a Hamilton path for $p\geq 3$, i.e., Theorem~\ref{thm:ham} does not apply.
Clearly, the edges of $W_{\ell,I}$ not incident with the central vertex~$c$ form a set~$\cP$ of paths whose end vertices lie in different partition classes and hence satisfy~$d(T(P))\notin\{2,4\}$ for all $P\in\cP$; see Figure~\ref{fig:biwheel}~(c).
Also, each of the paths has at least $16\geq d^2$ vertices by the earlier assumption on~$I$.
For any target vertex~$t$ of~$W_{\ell,I}$, we can take~$P_1$ to be the path from~$\cP$ that contains~$t$ extended by the central vertex~$c$, or if $t=c$ we extend any of the paths by~$c$ to become~$P_1$, and we take $P_2,\ldots,P_p$ to be the remaining paths in the set~$\cP$; see Figure~\ref{fig:biwheel}~(d).
Applying Theorem~\ref{thm:bip} yields that~$W_{\ell,I}$ is $t$-stackable, as desired.

\subsection{Powers of bipartite graphs}

We now apply Lemma~\ref{lem:chunk} to powers of bipartite graphs~$G$.
The \defi{$r$th power} of a graph~$G$ is the $r$-fold Cartesian product with itself, namely $G^r:=\underbrace{G\Cprod G\Cprod\cdots\Cprod G}_{r\text{ times}}$.

\begin{remark}
\label{rem:diff}
For a connected bipartite graph~$G$, we write $\Delta(G)\geq 0$ for the difference in size between the partition classes of~$G$.
Note that any path in~$G$ omits at least~$\Delta(G)-1$ many vertices.
In particular, if~$\Delta(G)\geq 2$, then $G$ has no Hamilton path.
Furthermore, note that $G^r$ is connected and bipartite as well, and satisfies $\Delta(G^r)=\Delta(G)^r$.
Consequently, if $\Delta(G)\geq 2$, then $\Delta(G^r)\geq 2^r$, and so the longest path in~$G^r$ omits at least~$2^r-1$ vertices, and in particular does not admit a Hamilton path for any $r\geq 1$.
The results below thus yield infinitely many graphs that are stackable, but that do not admit a Hamilton path (in a very strong sense that the longest path omits at least~$2^r-1$ many vertices; so Theorem~\ref{thm:ham} does not apply).
\end{remark}

\begin{theorem}
\label{thm:power}
Let $G$ be a connected bipartite graph with diameter~$d$.
If $G$ has a partition into $p\geq 2$ paths $P_1,\ldots,P_p$ with at least~$k\geq 2$ vertices each, then for any integer $r\ge 2$ that satisfies $k^r\ge (d r)^2$ the graph~$G^r$ is stackable. 
\end{theorem}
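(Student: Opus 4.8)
The plan is to fix an arbitrary target vertex~$t=(t_1,\ldots,t_r)$ of~$G^r$ and show that $G^r$ is $t$-stackable. By Remark~\ref{rem:diff} the graph~$G^r$ is connected and bipartite, and since distances in a Cartesian product add up coordinatewise, $G^r$ has diameter~$rd$. Let $P_1,\ldots,P_p$ be the given path partition of~$G$, say with $t_s\in P_{a_s}$ for all~$s\in[r]$. For each $\mathbf i=(i_1,\ldots,i_r)\in[p]^r$ we set $B_{\mathbf i}:=P_{i_1}\Cprod\cdots\Cprod P_{i_r}$; these are (not necessarily induced) subgraphs of~$G^r$ whose vertex sets partition~$V(G^r)$, each $B_{\mathbf i}$ is a Cartesian product of paths and hence admits a Hamilton path (see~\eqref{eq:HP1p}), and each has $\prod_s|V(P_{i_s})|\ge k^r$ vertices. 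We call $B_{\mathbf a}$ the \emph{target block}. Since $t\in B_{\mathbf a}$, Lemma~\ref{lem:path} applied to a Hamilton path of~$B_{\mathbf a}$ lets us stack all cups of the target block onto~$t$, and we do this first, so that from now on $t$ carries at least one cup. It then remains to stack, for every non-target block~$B=B_{\mathbf i}$, all of its cups onto~$t$ as well.

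So fix a non-target block~$B=P_{i_1}\Cprod\cdots\Cprod P_{i_r}$. I would select a Hamilton path $\pi=(\pi_1,\ldots,\pi_N)$ of~$B$ so that the sequence $x_i:=d_{G^r}(\pi_i,t)$ meets the hypotheses of Lemma~\ref{lem:chunk}. Indeed, $x_i\ge1$ for all~$i$ because $t\notin B$; we have $x_{i+1}=x_i\pm1$ by Lemma~\ref{lem:dpm1} since consecutive vertices of~$\pi$ are adjacent in the bipartite graph~$G^r$; and since every vertex of~$G^r$ is within distance~$rd$ of~$t$, the hypothesis $N\ge k^r\ge(rd)^2$ gives $N\ge(\max_i x_i)^2$. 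So the only thing left to arrange is $(x_1,x_2)\ne(2,1)$. Pick any corner $v=(v_1,\ldots,v_r)$ of~$B$, i.e., with $v_s$ an endpoint of~$P_{i_s}$ for every~$s$. A standard ``snake'' traversal shows that $B$ has a Hamilton path with $\pi_1=v$, and in fact one with $\pi_1=v$ and $\pi_2=w$ for any prescribed neighbour~$w$ of~$v$ in~$B$ (run the snake with the coordinate in which $w$ differs from~$v$ innermost). Since $v\ne t$ we have $d_{G^r}(v,t)\ge1$, and we distinguish three cases. If $d_{G^r}(v,t)\ge3$, take $\pi_1=v$, so $x_1\ge3$. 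If $d_{G^r}(v,t)=1$, take $\pi_1=v$; then $x_2\in\{0,2\}$ with $x_2\ne0$ as $\pi_2\ne t$, so $(x_1,x_2)=(1,2)$. If $d_{G^r}(v,t)=2$, I claim $v$ has a neighbour~$w$ in~$B$ with $d_{G^r}(w,t)=3$; taking $\pi_1=v$, $\pi_2=w$ then gives $(x_1,x_2)=(2,3)$.

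To prove the claim, suppose every neighbour of~$v$ in~$B$ is at distance~$1$ from~$t$. Each such neighbour is obtained by replacing some~$v_s$ by its unique neighbour~$v_s^{+}$ in~$P_{i_s}$ (which exists since $|V(P_{i_s})|\ge k\ge2$), and it is at distance $2-d_G(v_s,t_s)+d_G(v_s^{+},t_s)$ from~$t$. As $d_G(v_s^{+},t_s)=d_G(v_s,t_s)\pm1$ by Lemma~\ref{lem:dpm1}, the assumption forces $d_G(v_s^{+},t_s)=d_G(v_s,t_s)-1$, and in particular $d_G(v_s,t_s)\ge1$, for every~$s\in[r]$. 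Summing gives $2=d_{G^r}(v,t)=\sum_s d_G(v_s,t_s)\ge r\ge2$, so $r=2$ and $d_G(v_1,t_1)=d_G(v_2,t_2)=1$, hence $v_s^{+}=t_s$ for $s\in\{1,2\}$. Then $t_1\in V(P_{i_1})$ and $t_2\in V(P_{i_2})$, so $i_1=a_1$ and $i_2=a_2$ as the~$P_j$ partition~$V(G)$ with $t_s\in P_{a_s}$; thus $B=B_{\mathbf a}$, contradicting that $B$ is a non-target block.

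With $\pi$ chosen as above, Lemma~\ref{lem:chunk} partitions~$\pi$ into proper chunks, i.e., into consecutive subpaths $Q_1,\ldots,Q_q$ of~$\pi$ such that for each~$\kappa\in[q]$ some vertex~$u_\kappa\in V(Q_\kappa)$ satisfies $d_{G^r}(u_\kappa,t)=|V(Q_\kappa)|$. For each~$\kappa$, Lemma~\ref{lem:path} lets us stack the cups from~$Q_\kappa$ onto~$u_\kappa$, after which $u_\kappa$ carries $|V(Q_\kappa)|=d_{G^r}(u_\kappa,t)$ cups and one more move transfers them onto~$t$ (which is nonempty). Performing this for all chunks of all non-target blocks brings every cup onto~$t$, so $G^r$ is $t$-stackable, and since $t$ was arbitrary, $G^r$ is stackable. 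I expect the main obstacle to be the last case of the second paragraph: excluding the pathological distance pattern $2,1,2,1,\ldots$ along~$\pi$ needs both the bipartiteness argument of the third paragraph and the freedom to prescribe the first edge of a Hamilton path of a product of paths.
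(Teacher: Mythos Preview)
Your proof is correct and follows essentially the same route as the paper's: partition $G^r$ into the grid blocks $P_{i_1}\Cprod\cdots\Cprod P_{i_r}$, stack the target block onto~$t$ via Lemma~\ref{lem:path}, and for each remaining block choose a snake Hamilton path whose distance sequence to~$t$ meets the hypotheses of Lemma~\ref{lem:chunk}. The only cosmetic difference is in arranging $(x_1,x_2)\neq(2,1)$: the paper orients the factor paths up front (putting a coordinate with $t_j\notin P_{i_j}$ innermost and making $t_j$ not the first vertex of the other~$P_{i_j}$) so that the starting corner already has $x_1\ge r$, whereas you pick an arbitrary corner and carry out the equivalent case analysis on~$d(v,t)$---both arguments coincide in the delicate $r=2$ case.
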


The Cartesian product of paths~$P_1\Cprod\cdots \Cprod P_p$, where $P_p=(x_1,x_2,\ldots)$, admits a \defi{canonical Hamilton path}~$H(P_1,\ldots,P_p)$, defined recursively as $H(P_1):=P_1$ if $p=1$ and
\begin{equation}
\label{eq:HP1p}
H(P_1,\ldots,P_p):=H' x_1, \rev(H') x_2, H' x_3,\rev(H') x_4,\ldots, \quad \text{ if } p\geq 2,
\end{equation}
where $H':=H(P_1,\ldots,P_{p-1})$ and $\rev(H')$ is the path~$H'$ traversed in reverse order.
For example, for $P_1=(a,b,c)$ and $P_2=(d,e,f)$ we have
\[ H(P_1,P_2)=(a,d),(b,d),(c,d),(c,e),(b,e),(a,e),(a,f),(b,f),(c,f). \]

\begin{proof}[Proof of Theorem~\ref{thm:power}]
For $i\in[p]$ we write $N_i\geq k\geq 2$ for the number of vertices of the path~$P_i$.
The graph $G^r$ is bipartite, has diameter~$d r$, and it has an induced partition into $r$-dimensional grids $Q(i_1,\ldots,i_r):=P_{i_1}\Cprod P_{i_2}\Cprod\cdots\Cprod P_{i_r}$ for integers $i_1,\ldots,i_r\in[p]$.
Each of the grids $Q(i_1,\ldots,i_r)$ admits a canonical Hamilton path and contains $\prod_{j=1}^r N_{i_j}$ vertices.
Fix an arbitrary vertex $t=(t_1,\ldots,t_r)$ of~$G^r$.
By Lemma~\ref{lem:path}, we can stack the cups from $Q=Q(i_1,\ldots,i_r)$ with $t\in Q$ onto~$t$.
Now consider one of the remaining grids $Q=Q(i_1,\ldots,i_r)$ with $t\notin Q$.
We assume w.l.o.g.\ that $t_1\notin P_{i_1}$.
Furthermore, for each $j\in[2,r]$ we choose the orientation of the path~$P_{i_j}$ so that $t_j$ is not its first vertex (this is possible because it has $N_{i_j}\geq 2$ vertices), and we let $x_{j,1},\ldots,x_{j,N_{i_j}}$ be the sequence of distances from the vertices along this path to~$t_j$ in the graph~$G$.
By the assumption that $t_1\notin P_{i_1}$ we have $x_{1,k}\geq 1$ for all $k\in[N_{i_1}]$, and by the choice of orientation of~$P_{i_j}$ we have $x_{j,1}\geq 1$ for all $j\in[2,r]$.
Let $x_1,\ldots,x_N$, $N:=\prod_{j=1}^r N_{i_j}\geq k^r$, be the sequence of distances from the vertices along the canonical path $H=H(P_{i_1},\ldots,P_{i_r})$ to~$t$ in the graph~$G^r$.
As $t\notin Q$ we have $x_i\geq 1$ for all $i\in[N]$, and as $G^r$ is bipartite we have $x_{i+1}=x_i\pm 1$ for all $i\in[N-1]$ by Lemma~\ref{lem:dpm1}.
Furthermore, note that $x_1=\sum_{j=1}^r x_{j,1}$, which if $r\geq 3$ implies that $x_1\geq 3$ and in particular $x_1\neq 2$.
If $r=2$ and $x_1=2$, then we have $x_{1,1}=x_{2,1}=1$ and therefore $x_{1,2}=2$, implying that $x_2=x_{1,2}+x_{2,1}=3$ and therefore $(x_1,x_2)\neq (2,1)$.
Lastly, as the diameter of~$G^r$ is $d r$ the assumption $N\geq k^r \geq (d r)^2$ implies that $N\geq (\max_{i=1}^N x_i)^2$.
Therefore, the conditions of Lemma~\ref{lem:chunk} are satisfied, we can split the path~$H$ into subpaths, stack the vertices from each subpath onto one of its vertices that has the correct distance to~$t$ by Lemma~\ref{lem:path}, and from there with one additional move onto~$t$.
\end{proof}

\begin{corollary}
\label{thm:high-power}
For any connected bipartite graph~$G$ that has a partition into $p\geq 2$ paths with at least~$k\geq 2$ vertices, there is an integer~$r_0\ge 2$ such that $G^r$ is stackable for all~$r\geq r_0$.
\end{corollary}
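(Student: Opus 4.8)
The plan is to obtain Corollary~\ref{thm:high-power} as an immediate consequence of Theorem~\ref{thm:power}, the only extra ingredient being the elementary fact that an exponential eventually dominates a polynomial. First I would record that since $G$ is connected and has at least $2k\geq 4$ vertices, its diameter $d$ is a finite positive integer, so $(dr)^2=d^2r^2$ is a genuine (fixed) quadratic polynomial in~$r$. The hypothesis of Theorem~\ref{thm:power} that needs to be secured for a given~$r$ is then the single inequality $k^r\geq (dr)^2$, together with $r\geq 2$.

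Next I would argue that this inequality holds for all sufficiently large~$r$: since $k\geq 2$, the left-hand side grows exponentially in~$r$ while the right-hand side grows only polynomially, so $k^r/(dr)^2\to\infty$ as $r\to\infty$. Hence there is an integer~$r_1$ with $k^r\geq (dr)^2$ for all $r\geq r_1$. Setting $r_0:=\max\{2,r_1\}$, every $r\geq r_0$ satisfies both $r\geq 2$ and $k^r\geq (dr)^2$, so $G^r$ is stackable by Theorem~\ref{thm:power}, which is exactly the claim. If an explicit bound is wanted, I would take logarithms: $k^r\geq (dr)^2$ is equivalent to $r\ln k\geq 2\ln d+2\ln r$, and using $\ln k\geq \ln 2$ a short estimate shows that $r_0:=\max\{2,\lceil c(\ln d+1)\rceil\}$ works for a suitable absolute constant~$c$.

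I do not anticipate any genuine obstacle here; the statement is purely a ``take $r$ large enough'' repackaging of Theorem~\ref{thm:power}, and the only thing to verify is the asymptotic comparison $k^r=\omega(r^2)$ for $k\geq 2$, which is standard. The one minor point worth stating explicitly in the write-up is why $d$ is finite and positive (connectedness of~$G$ plus $|V(G)|\geq 4$), so that $(dr)^2$ is well defined and the comparison makes sense.
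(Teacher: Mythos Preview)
Your proposal is correct and follows essentially the same approach as the paper: invoke Theorem~\ref{thm:power} once you observe that $k^r$ grows exponentially while $(dr)^2$ grows quadratically, so the inequality $k^r\geq (dr)^2$ holds for all sufficiently large~$r$. The paper's proof is even terser than yours, omitting the explicit $r_0=\max\{2,r_1\}$ and the logarithmic estimate, but the idea is identical.
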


\begin{proof}
Let $d$ be the diameter of~$G$.
Note that $k^r$ grows exponentially with~$r$, whereas $(d r)^2$ grows quadratically with~$r$, so there is an integer~$r_0$ such that $k^r\geq (d r)^2$ for all $r\geq r_0$, and then Theorem~\ref{thm:power} applies.
\end{proof}

\subsection{Powers of trees}

To find a partition of~$G$ into long paths, it suffices to find a spanning tree of~$G$ in which any two leaves have large distance.
Note that the diameter of a tree~$T$ is the maximum of~$d(x,y)$, taken over all pairs of leaves~$x$ and~$y$.
We define the \defi{spread} of a tree~$T$ as the minimum of~$d(x,y)$, taken over all pairs of distinct leaves~$x$ and~$y$. 

\begin{lemma}
\label{lem:spread}
Let $T$ be a tree with spread~$k$.
Then $T$ has a partition into paths on at least~$k/2$ vertices each.
\end{lemma}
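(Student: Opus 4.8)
The plan is to peel off paths greedily from the leaves of $T$. First I would handle the trivial case: if $T$ has at most one leaf, then $T$ is itself a single path (or a single vertex), which has at least $k/2$ vertices trivially (in fact the spread is vacuously large), so the partition consisting of $T$ alone works. So assume $T$ has at least two leaves. I would then induct on the number of vertices of $T$. Pick any leaf $u$ of $T$, and let $v$ be the vertex closest to $u$ that has degree at least $3$ in $T$ (a "branch vertex"); if no such vertex exists, then $T$ is a path and we are done as above. Let $P$ be the path from $u$ to $v$. Along $P$, every internal vertex has degree $2$ in $T$ (by the choice of $v$), so $P$ has at least $d(u,w)$ vertices for the second leaf $w$ that is "visible" from $u$ through $v$ — more precisely, since $v$ is a branch vertex, it lies on a shortest leaf-to-leaf path, and the two branches emanating from $v$ away from each other each contain a leaf, so one of them together with $u$ gives a pair of leaves at distance at least the length of $P$; hence $d(P) \ge $ (length of that leaf pair)/2 $\ge k/2$. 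Actually the cleanest bound: the path from $u$ through $v$ to any leaf in a different subtree at $v$ has length $\ge k$, and the $u$-to-$v$ portion is at least half of the shorter of the two such leaf-paths, so $P$ has at least $\lceil k/2 \rceil$ edges, hence at least $k/2 + 1 \ge k/2$ vertices. (I would be a little careful whether the count is of vertices or edges; a safe statement is "$P$ has at least $k/2$ vertices", and I would verify the off-by-one from the spread definition.)

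Next I would remove from $T$ the vertices of $P$ except $v$ itself, i.e.\ set $T' := T \setminus (V(P)\setminus\{v\})$. Then $T'$ is a tree (removing a "pendant path" from a tree leaves a tree) with strictly fewer vertices. The key point is that the spread of $T'$ is at least $k$: any two distinct leaves $x,y$ of $T'$ are also leaves of $T$, or one of them (namely $v$, if $v$ became a leaf of $T'$) — and here I need to check that even if $v$ is a new leaf of $T'$, its distance in $T'$ to any other leaf of $T'$ is at least $k$. This holds because $v$ had degree $\ge 3$ in $T$, so after deleting only the one branch toward $u$ it still has degree $\ge 2$ in $T'$ unless... wait — if $v$ had degree exactly $3$, it has degree $2$ in $T'$, so it is not a leaf of $T'$; if $v$ had higher degree it has degree $\ge 3$ still. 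So $v$ is never a leaf of $T'$, hence $\mathrm{Leaves}(T') \subseteq \mathrm{Leaves}(T)$, and distances in $T'$ between such vertices equal distances in $T$ (the removed path was pendant and not on any $x$–$y$ geodesic for $x,y \in V(T')$). Therefore $\mathrm{spread}(T') \ge \mathrm{spread}(T) = k$ (possibly larger, which is fine, and if $T'$ has at most one leaf we are in the base case). By the induction hypothesis, $T'$ has a partition into paths on at least $k/2$ vertices each; adjoining $P$ (on at least $k/2$ vertices) gives the desired partition of $T$, since $V(P) \setminus \{v\}$ and $V(T')$ partition $V(T)$ and $v$ lies in exactly one path of the $T'$-partition while $P$ contains $v$ — so actually I should let $P$ be the path from $u$ to the vertex just before $v$, so that $P$ and the $T'$-partition are genuinely disjoint; then $P$ has at least $k/2$ vertices still (one less than before, so I want the bound "at least $\lceil k/2\rceil + 1$ vertices before dropping $v$", which I would confirm from the spread inequality).

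The main obstacle, and the place I would spend the most care, is the bookkeeping around the branch vertex $v$: ensuring the pendant path $P$ has at least $k/2$ vertices after we decide whether or not $v$ is included, and ensuring that deleting $P$ does not decrease the spread. Both hinge on the same structural fact — that $v$ is a branch vertex lying on some leaf–leaf geodesic — so I would isolate that as the first lemma-within-the-proof: if $u$ is a leaf and $v$ is the nearest branch vertex to $u$, then $v$ lies on a shortest path between two leaves, one of which can be taken to be $u$, whence $d(u,v) \ge k/2$ and, for any leaf $x$ in a branch at $v$ other than the $u$-branch, the $x$-to-$v$ distances are unaffected by deleting the $u$-branch. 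Everything else (the induction, the base case, reassembling the partition) is routine once this is pinned down.
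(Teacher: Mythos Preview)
Your argument has a genuine gap at the step where you bound the length of the pendant path~$P$. You pick an \emph{arbitrary} leaf $u$ and let $v$ be its nearest branch vertex, then assert that $d(u,v)\ge k/2$. This is false in general. For a concrete counterexample, take the path $a_0,a_1,\dots,a_{100}$ and attach a single pendant vertex $b$ to~$a_{50}$. The leaves are $a_0,a_{100},b$ with pairwise distances $100,51,51$, so the spread is $k=51$. Choosing $u=b$, the nearest branch vertex is $v=a_{50}$ and $d(u,v)=1$, nowhere near $k/2$. Your sentence ``the $u$-to-$v$ portion is at least half of the shorter of the two such leaf-paths'' does not hold: the $u$--$w$ path through~$v$ may put almost all of its length on the far side of~$v$.

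The fix, and what the paper does, is to choose the leaf with care rather than arbitrarily. Root $T$ at any leaf~$r$ and take $x$ to be a leaf at \emph{maximum} distance from~$r$; let $y$ be the deepest branch vertex on the $r$--$x$ path. If $z$ is any other leaf descending from~$y$, maximality of~$x$ gives $d(y,z)\le d(y,x)$, whence
\[
k \le d(x,z) = d(x,y)+d(y,z) \le 2\,d(x,y),
\]
so $d(x,y)\ge k/2$. Peeling off the path from~$x$ to the child~$y'$ of~$y$ on $P(x,y)$ then yields a path on at least $k/2$ vertices, and---exactly as you observed---does not turn $y$ into a leaf (its degree drops by one from $\ge 3$), so the residual tree has no new leaves and its spread is still at least~$k$. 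Your peeling/induction framework and your spread-preservation argument are fine; only the choice of leaf needs to change.
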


\begin{proof}
This proof is illustrated in Figure~\ref{fig:tree}.
For any two vertices~$x$ and~$y$ in~$T$, we write $P(x,y)$ for the path from~$x$ to~$y$ in~$T$.
Consider $T$ rooted at one of its leaves~$r$.
The partition of~$T$ into paths is obtained by repeatedly removing a path from this rooted tree as follows:
If the remaining tree is already a path, then the partition consists of the entire path, which has length at least~$k$ and therefore at least~$k+1$ vertices.
Otherwise, we choose a leaf~$x$ at maximum distance from~$r$, let $y$ be the vertex on~$P(r,x)$ farthest from~$r$ that has more than one descendant, and we let $y'$ be the descendant of~$y$ on~$P(r,x)$.
The path $P(y,x)$ has length at least~$k/2$, otherwise $x$ would have distance less than~$k$ from one of the other descendant leaves of~$y$.
It follows that $P(y',x)$ has at least~$k/2$ vertices.
So we remove~$P(y',x)$ from~$T$ as part of the partition and repeat the process, with the same root~$r$.
As removing~$P(y',x)$ from the tree does not create additional leaves, this yields the desired partition.
\end{proof}

\begin{figure}[h]
\includegraphics{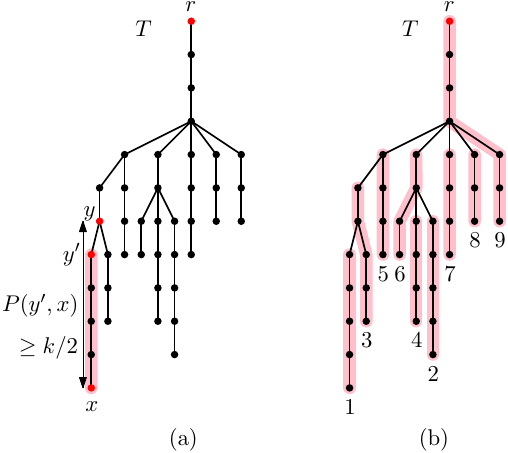}
\caption{Illustration of the proof of Lemma~\ref{lem:spread}.
Part (a) shows one step of the partition, and (b) shows the final partition of~$T$ into paths, where the numbers $1,\ldots,9$ indicate the order in which the paths are removed.}
\label{fig:tree}
\end{figure}

\begin{theorem}
\label{thm:tree3}
Let $T$ be a tree with spread~$k\geq 72$ and diameter at most~$k^{3/2}/\sqrt{72}$.
Then $T^3$ is stackable.
\end{theorem}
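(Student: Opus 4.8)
The plan is to deduce Theorem~\ref{thm:tree3} from Theorem~\ref{thm:power} by choosing the parameters appropriately. Recall that Theorem~\ref{thm:power} requires a connected bipartite graph~$G$ with a partition into $p\geq 2$ paths of at least~$k\geq 2$ vertices each, and an integer~$r\geq 2$ with $k^r\geq (dr)^2$, where $d$ is the diameter of~$G$; it then concludes that $G^r$ is stackable. Here we want to apply this with $G=T$ and $r=3$.

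First I would verify the structural hypotheses. A tree is connected and bipartite, so $G=T$ qualifies. Next, from the assumption that $T$ has spread~$k\geq 72$, Lemma~\ref{lem:spread} gives a partition of~$T$ into paths on at least~$k/2$ vertices each; write $k':=\lceil k/2\rceil\geq 36\geq 2$ for this lower bound on the number of vertices per path. We need $p\geq 2$ paths: if $T$ is itself a path then $T^3$ is a Cartesian product of three paths, which admits a Hamilton path by~\eqref{eq:HP1p}, so $T^3$ is stackable by Theorem~\ref{thm:ham} and we are done in that case; otherwise $T$ has at least three leaves, so the partition produced by Lemma~\ref{lem:spread} has $p\geq 2$ paths. (Alternatively, one can split one of the long paths to artificially create a second part, as long as both halves still have at least~$2$ vertices, which is fine since $k'\geq 36$.)

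The remaining task is the numerical inequality $k'^{\,r}\geq (dr)^2$ with $r=3$, i.e., $(k/2)^3\geq (3d)^2 = 9d^2$. Using the hypothesis $d\leq k^{3/2}/\sqrt{72}$ we get $9d^2\leq 9\cdot k^3/72 = k^3/8 = (k/2)^3$, so the inequality holds (with the lower bound $k/2$, and a fortiori with $k'=\lceil k/2\rceil$). Thus all conditions of Theorem~\ref{thm:power} are met for $G=T$ and $r=3$, and we conclude that $T^3$ is stackable.

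I expect the only subtlety — the ``main obstacle,'' such as it is — to be the bookkeeping around the case $p=1$ (a tree that is a path) and making sure the per-path vertex bound from Lemma~\ref{lem:spread} is compatible with the ``at least $k\geq 2$ vertices'' requirement after possibly one extra split; both are handled as above. The constant~$72$ in the hypotheses is precisely tuned so that $9/72 = 1/8 = (1/2)^3$ works out, so no slack is wasted and no further estimation is needed.
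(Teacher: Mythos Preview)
Your argument is essentially the paper's: apply Lemma~\ref{lem:spread} to get paths of length at least~$k/2$, then feed $G=T$, $r=3$ into Theorem~\ref{thm:power} and check $(k/2)^3\ge(3d)^2$ via $9/72=1/8$. You are in fact more careful than the paper on one point: Theorem~\ref{thm:power} is stated for $p\ge 2$, and you explicitly dispose of the case where $T$ is a single path via Theorem~\ref{thm:ham}, whereas the paper's proof silently passes over this.

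One caveat: your parenthetical alternative (``split one of the long paths to artificially create a second part, as long as both halves still have at least~$2$ vertices'') does not actually work. The $k$ in Theorem~\ref{thm:power} is the minimum number of vertices over all paths in the partition, and the inequality $k^r\ge(dr)^2$ must hold for that minimum. Splitting a path of $\ge 36$ vertices into two halves drops the minimum to about~$18$, and then $18^3=5832$ must dominate $9d^2$; but $d\ge k\ge 72$ (diameter is at least the spread), so $9d^2\ge 9\cdot 72^2=46656$, and the inequality fails badly. So keep your case distinction and drop the parenthetical.
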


\begin{proof}
As $T$ is a tree, it is clearly bipartite.
Furthermore, by Lemma~\ref{lem:spread}, $T$ admits a partition into paths with at least~$k/2\geq 36$ vertices each.
For $d:=k^{3/2}/\sqrt{72}$ and $r:=3$ the inequality $(k/2)^r\geq (d r)^2$ is satisfied, as shown by the following computation:
\[ (k/2)^3 \geq \big((k^{3/2}/\sqrt{72})\cdot 3\big)^2 \quad \Longleftrightarrow \quad 1/8 \geq 9/72=1/8. \]
The claim hence follows from Theorem~\ref{thm:power}.
\end{proof}

The above proof also works, in fact, for $k\geq 3$, but the statement of the theorem would be void for $k=3,\ldots,71$, because in those cases $k>k^{3/2}/\sqrt{72}$, and the diameter of any tree is always greater or equal to the spread.

The \defi{$s$-subdivision} of a graph~$G$ is the graph obtained by replacing every edge of~$G$ by a path of length~$s$.
The next theorem shows a possible application of Theorem~\ref{thm:tree3}.
Specifically, the theorem asserts that the third power of any tree~$T$ becomes stackable when subdividing the edges of~$T$ sufficiently often.
This gives a way of constructing more infinite families of stackable powers of trees (recall Remark~\ref{rem:diff}).

\begin{theorem}
\label{thm:sub}
Let $T$ be a tree with spread~$k$ and diameter~$d$.
Let $T'$ be the $s$-subdivision of~$T$ for some $s\geq 72d^2/k^3$.
Then $(T')^3$ is stackable.
\end{theorem}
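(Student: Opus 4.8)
The plan is to deduce this directly from Theorem~\ref{thm:tree3} applied to the subdivided tree~$T'$. The first step is to record two elementary facts about the $s$-subdivision operation. Replacing each edge by a path of length~$s$ introduces only vertices of degree~$2$, so it does not change the set of leaves, and $T'$ remains connected and acyclic, hence a tree. Moreover, the unique $T$-path between two original vertices has each of its edges replaced by a path of length~$s$, so every distance between original vertices is multiplied by exactly~$s$; since both the spread and the diameter of a tree are extrema of distances taken over pairs of its leaves, and $T'$ has the same leaves as~$T$, we conclude that the spread of~$T'$ equals~$sk$ and the diameter of~$T'$ equals~$sd$.

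It then remains to verify the two hypotheses of Theorem~\ref{thm:tree3} for~$T'$, namely that its spread is at least~$72$ and its diameter is at most (its spread)$^{3/2}/\sqrt{72}$. For the spread bound I would use that the diameter of a tree is always at least its spread, so $d\geq k$, and therefore $sk \geq (72d^2/k^3)\cdot k = 72d^2/k^2 \geq 72$. For the diameter bound, substituting the values $sd$ and $sk$, the inequality $sd\leq (sk)^{3/2}/\sqrt{72}$ is, after dividing by~$s$, squaring, and clearing denominators, equivalent to $72d^2 \leq sk^3$, which is exactly the hypothesis $s\geq 72d^2/k^3$. Hence Theorem~\ref{thm:tree3} applies to~$T'$ and yields that $(T')^3$ is stackable.

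I do not expect a genuine obstacle here; this is essentially a scaling argument. The only points needing a little care are that subdivision multiplies distances by exactly~$s$, so that the spread, the diameter, and hence the ratio controlled in Theorem~\ref{thm:tree3} all transform as claimed, and the use of $d\geq k$ to push the spread past the threshold~$72$ even when the original spread~$k$ is small. If $T$ has no edge the statement is trivial, so one may assume throughout that $T$ has at least two distinct leaves and thus a well-defined spread and diameter.
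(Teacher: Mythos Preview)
Your proof is correct and follows essentially the same approach as the paper: compute the spread and diameter of~$T'$ as $sk$ and~$sd$, use $d\geq k$ to get $sk\geq 72$, and rewrite the diameter condition of Theorem~\ref{thm:tree3} as $s\geq 72d^2/k^3$. The only difference is that you spell out more carefully why subdivision preserves the leaf set and scales distances by~$s$, which the paper leaves implicit.
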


\begin{proof}
We want to apply Theorem~\ref{thm:tree3} to~$T'$.
Note that $T'$ has spread~$k'=sk$ and diameter~$d'=sd$, so the inequality $d'\leq k'^{3/2}/\sqrt{72}$ is equivalent to $sd\leq (sk)^{3/2}/\sqrt{72}$, which is equivalent to the assumption $s\geq 72d^2/k^3$.
Note also that $k'=sk\geq 72$ by the assumption $s\geq 72d^2/k^3$ and the observation that~$d\geq k$.
\end{proof}

\section{Strongly non-stackable graphs}
\label{sec:nonstack}

In this section we construct families of graphs~$G$ that are not $t$-stackable for \emph{any} target vertex~$t$ of~$G$.
Recall that we refer to such a graph as \defi{strongly non-stackable}.
Recall that Veselovac~\cite{veselovac:2022} proved that the tree obtained from two stars with at least 3 rays by joining their centers with an edge is strongly non-stackable.
In this section, we enlarge the catalogue of strongly non-stackable graphs.
In particular, we show that there are strongly non-stackable graphs with arbitrarily large minimum degree and connectivity (which must be non-trees).

The next result is a versatile result to prove non-stackability based on the presence of large independent sets in the graph.

\begin{lemma}
\label{lem:nonstack}
Let $G$ be a connected graph with vertex set~$V$ and let $t\in V$.
Let $U$ be an independent set in~$G$, and define $U':=\{x\in U\mid d(x,t)\geq 2\}$, $W:=V\setminus U$ and $d:=\max_{x\in V} d(x,t)$.
If $|U'|>(d-1)|W|$, then $G$ is not $t$-stackable.
\end{lemma}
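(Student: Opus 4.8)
The plan is to assume, for contradiction, that $G$ \emph{is} $t$-stackable, fix a sequence of $n-1$ stacking moves that ends with all $n$ cups on~$t$, and derive the reverse inequality $|U'|\leq (d-1)|W|$.

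First I would extract a tree structure from the winning sequence. Since a move adds cups to a vertex~$y$ only when $y$ is nonempty, once a vertex has no cups it stays empty forever; hence every vertex $v\neq t$ has a unique move, its \emph{departure}, which removes all cups from~$v$. Write $\delta(v)$ for the target of that move and $r_v$ for the number of cups on~$v$ at that moment, so $d(v,\delta(v))=r_v$. A short timing argument shows the map $v\mapsto\delta(v)$ is acyclic (along a cycle the departure times would be strictly increasing), so it defines a spanning tree~$T$ rooted at~$t$ in which $\delta(v)$ is the parent of~$v$; note the edges of~$T$ need not be edges of~$G$. A straightforward induction along~$T$ — all children of~$v$ depart before~$v$, each delivering the cups of its whole subtree — gives $r_v=s_v$, where $s_v$ is the number of vertices in the subtree of~$v$ in~$T$. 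Thus
\[ d(v,\delta(v))=s_v\qquad\text{for all }v\in V\setminus\{t\}. \]
In particular, for a child~$c$ of~$t$ we get $s_c=d(c,t)\leq d$, so every connected component of~$T-t$ (namely, a subtree hanging off a child of~$t$) has at most~$d$ vertices.

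The key step is a local count on these components. Let $B$ be a component of $T-t$. If $B$ meets~$W$, then, writing $w:=|W\cap B|\geq 1$, we have $|U'\cap B|\leq|U\cap B|=|B|-w\leq d-w\leq(d-1)w$, the last inequality being $d\leq dw$. If instead $B\subseteq U$, I claim $B$ is a single vertex adjacent to~$t$: otherwise the root of~$B$ has a child, so $B$ contains a leaf~$\ell$ of~$T$ other than its root, whence $\delta(\ell)\in B$ and $d(\ell,\delta(\ell))=s_\ell=1$; but then $\ell$ and $\delta(\ell)$ are adjacent vertices of~$U$, contradicting that $U$ is independent. So in this case $B=\{c\}$ with $d(c,t)=1$, hence $U'\cap B=\emptyset$. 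Either way $|U'\cap B|\leq(d-1)|W\cap B|$. Summing over all components of~$T-t$, which partition $V\setminus\{t\}\supseteq U'$, gives $|U'|\leq(d-1)\,|W\setminus\{t\}|\leq(d-1)|W|$, contradicting the hypothesis; therefore $G$ is not $t$-stackable. (Note the hypothesis $|U'|>(d-1)|W|$ already forces $|U'|\geq 1$, hence a vertex at distance $\geq 2$ from~$t$, so $d\geq 2$ and no degenerate small case arises.)

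The step I expect to be the main obstacle is the first one: carefully justifying that a winning sequence yields the rooted tree~$T$ with the identity $d(v,\delta(v))=s_v$ (the acyclicity and the "children depart first" induction). Once that identity is in hand, the branch-size bound $s_c\leq d$ is immediate and the remainder is only the elementary per-component estimate above.
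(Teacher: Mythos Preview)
Your proof is correct and follows essentially the same approach as the paper: both partition $V\setminus\{t\}$ according to which final move carries each cup to~$t$ (these groups are exactly your components of~$T-t$), observe that any such group of size~$\geq 2$ must contain a vertex of~$W$ because some merge inside the group is along an edge of~$G$ and $U$ is independent, and then count. The paper does this in a terser way---it never names the tree or proves the identity $d(v,\delta(v))=s_v$, only using that each final stack has size~$c\leq d$ and that its ``first move'' forces a $W$-vertex among the $c$ origins---whereas your explicit tree scaffolding makes the same structure more transparent.
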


In words, the set~$U'$ are those vertices from the independent set~$U$ that have distance at least~2 from~$t$, the set~$W$ is the complement of~$U$, and $d$ is the maximum distance of vertices from~$t$.
The inequality $|U'|>(d-1)|W|$ expresses that there are many more vertices in~$U'$ than in~$W$.
Intuitively, the graph~$G$ referred to in Lemma~\ref{lem:nonstack} has small diameter, and a large independent set, a large subset of which are non-neighbors of~$t$.

\begin{proof}
For the sake of contradiction suppose that $G$ is $t$-stackable.
Consider one of the cups placed on a vertex~$x\in U'$.
As $d(x,t)\geq 2$, this cup cannot be moved to~$t$ directly, but it is moved to~$t$ in a sequence of moves, where the final move involves $c\geq 2$ cups being moved from some vertex to~$t$.
Consider the origin of these $c$ cups.
One of them originated from~$x$ by construction.
Furthermore, not all of these $c$ cups can originate from vertices in~$U$, simply because any two vertices in~$U$ have distance at least~2, so the first move in the sequence had to involve a vertex from~$W$, which is empty after this sequence of moves. 
It follows that this sequence of moves emptied at least one vertex from~$W$ and at most $c-1$ vertices from~$U'$.
As $|U'|>(d-1)|W|$ and $d\geq c$ by the definition of~$d$, we conclude that at the end of the game, at least one of the vertices in~$U'$ still has a cup on it, a contradiction.
\end{proof}

Given a graph~$G$, the \defi{$c$-cactus of~$G$} is the graph obtained from~$G$ by attaching $c$ pendant edges to each vertex of~$G$; see Figure~\ref{fig:cactus}.

\begin{theorem}
\label{thm:cactus}
Let $G$ be a connected graph with $n\geq 2$ vertices and diameter~$d$, and consider the $c$-cactus $G'$ of~$G$ for $c>(d+1)n/(n-1)$.
Then $G'$ is strongly non-stackable.
\end{theorem}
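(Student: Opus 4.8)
The plan is to derive strong non-stackability from Lemma~\ref{lem:nonstack}, applied to~$G'$ for each possible target vertex~$t$. As the independent set in that lemma we take $U$ to be the set of all $cn$ pendant vertices added when forming the $c$-cactus. Each such vertex has degree~$1$, with its unique neighbour in~$V(G)$, so no two pendant vertices are adjacent and $U$ is indeed independent; moreover $G'$ is connected since $G$ is. With this choice, $W:=V(G')\setminus U=V(G)$, so $|W|=n$, and writing $e:=\max_{x\in V(G')}d(x,t)$, it suffices by Lemma~\ref{lem:nonstack} to show $|U'|>(e-1)n$, where $U':=\{x\in U\mid d(x,t)\ge 2\}$.

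First I would bound $e$. Attaching pendant edges creates no new shortest paths between vertices of~$G$, so $d_{G'}(u,v)=d_G(u,v)$ for all $u,v\in V(G)$; consequently any two vertices of~$G'$ are at distance at most~$d+2$, the extremal case being two pendant vertices attached to a pair of vertices of~$G$ at distance~$d$. Hence $e\le d+2$. Next I would bound~$|U'|$ by a short case analysis. If $t\in V(G)$, then the pendant vertices at distance less than~$2$ from~$t$ are exactly the $c$ pendants attached to~$t$, so $|U'|=cn-c=c(n-1)$. If instead $t$ is itself a pendant vertex, then $t$ is the only pendant vertex at distance less than~$2$ from~$t$, so $|U'|=cn-1$, and since $c\ge 1$ this is again at least $c(n-1)$. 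Thus $|U'|\ge c(n-1)$ in all cases.

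Finally, the hypothesis $c>(d+1)n/(n-1)$ rearranges to $c(n-1)>(d+1)n$, so combining the two estimates gives
\[ |U'|\ \ge\ c(n-1)\ >\ (d+1)n\ \ge\ (e-1)n, \]
which is precisely the inequality required by Lemma~\ref{lem:nonstack}. Therefore $G'$ is not $t$-stackable, and as $t$ was an arbitrary vertex of~$G'$, this shows that $G'$ is strongly non-stackable. I do not expect a genuine obstacle here: the proof is essentially bookkeeping on top of Lemma~\ref{lem:nonstack}, the only point requiring a little care being to check that the slack in the constant $(d+1)n/(n-1)$ comfortably absorbs the crude bound $e\le d+2$ uniformly over both kinds of target vertex (indeed, when $|U'|$ is smallest, namely for $t\in V(G)$, one even has the sharper bound $e\le d+1$).
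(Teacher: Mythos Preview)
Your proof is correct and follows essentially the same approach as the paper: take $U$ to be the pendant vertices, bound $|U'|\ge c(n-1)$ and the eccentricity by $d+2$, then invoke Lemma~\ref{lem:nonstack}. Your version is slightly more detailed (explicit case split on whether $t$ lies in~$V(G)$ or is a pendant, and the closing remark on the sharper eccentricity bound when $t\in V(G)$), but the argument is the same.
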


Figure~\ref{fig:cactus} shows examples of applying Theorem~\ref{thm:cactus} to construct strongly non-stackable graphs.

\begin{figure}[h]
\includegraphics[page=1]{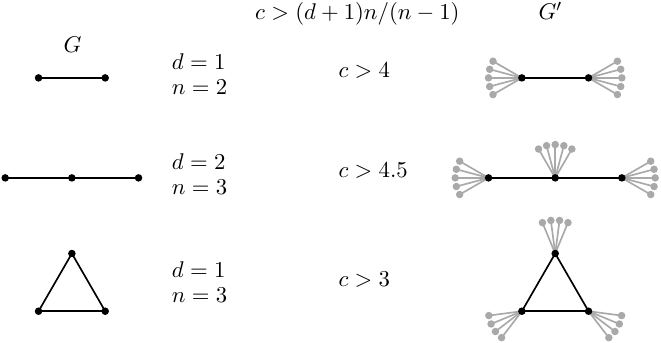}
\caption{Three examples of the cactus construction in Theorem~\ref{thm:cactus}.
The graphs~$G'$ on the right hand side are strongly non-stackable, i.e., not $t$-stackable for any vertex~$t$.}
\label{fig:cactus}
\end{figure}

\begin{proof}
Let $V$ be the vertex set of~$G$.
We can take all degree~1 vertices of~$G'$ as the set~$U$, i.e., $|U|=cn$ and $|W|=|V\setminus U|=n$.
For any vertex~$t$ of~$G'$ we have $|U'|\geq c(n-1)$.
Furthermore, the diameter of~$G'$ equals $d':=d+2$.
Consequently, the inequality $|U'|>(d'-1)|W|$ is satisfied, as $c(n-1)>(d+1)n$ is equivalent to the assumption~$c>(d+1)n/(n-1)$.
\end{proof}

Generalizing this construction, given any graph~$G$, we can glue copies of a large enough complete bipartite graph~$K_{c,c}$ to each vertex of~$G$, which produces a strongly non-stackable graph with arbitrarily large minimum degree.
This yields the following result.

\begin{theorem}
\label{thm:nonstack-mindeg}
For any integer~$c\geq 1$, there is a strongly non-stackable graph~$G$ with minimum degree~$c$.
\end{theorem}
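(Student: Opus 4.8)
The plan is to wire two imbalanced complete bipartite gadgets together and then invoke Lemma~\ref{lem:nonstack}. Fix $m:=8c+1$ and let $G$ be obtained from two disjoint copies of~$K_{c,m}$---with bipartition classes $A_1,B_1$ and $A_2,B_2$, where $|A_i|=c$ and $|B_i|=m$---by adding a single edge joining a vertex~$a^1\in A_1$ to a vertex~$a^2\in A_2$. First I would record the elementary structure of this graph: it is connected; every vertex of~$B_1\cup B_2$ has degree exactly~$c$, while every vertex of~$A_1\cup A_2$ has degree $m$ or $m+1$, both at least~$c$, so $G$ has minimum degree exactly~$c$; and the diameter of~$G$ is at most~$5$, because every path between the two gadgets must use the bridge edge~$a^1a^2$ and a vertex of~$A_1$ is at distance at most~$2$ from~$a^1$ (and similarly for~$A_2$).

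The core of the argument is to apply Lemma~\ref{lem:nonstack} with the \emph{same} choice of independent set for every target vertex~$t$, namely $U:=B_1\cup B_2$ and $W:=V(G)\setminus U=A_1\cup A_2$; here $U$ is independent because the only edge of~$G$ not internal to one gadget is~$a^1a^2$, which lies inside~$W$, and we have $|U|=2m$, $|W|=2c$, and $d:=\max_{x}d(x,t)\leq 5$. It then remains to check the inequality $|U'|>(d-1)|W|$ for $U':=\{x\in U : d(x,t)\geq 2\}$. I would split into two cases. If $t\in W$, say $t\in A_i$, then the only neighbours of~$t$ lying in~$U$ are the $m$ vertices of~$B_i$, so $|U'|\geq|U|-m=m$. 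If $t\in U$, say $t\in B_i$, then $t$ has no neighbour in~$U$ at all (its neighbourhood is~$A_i\subseteq W$), so $U'=U\setminus\{t\}$ and $|U'|=2m-1\geq m$. In both cases $|U'|\geq m=8c+1>8c\geq(d-1)|W|$, so Lemma~\ref{lem:nonstack} gives that $G$ is not $t$-stackable; since $t$ was arbitrary, $G$ is strongly non-stackable and has minimum degree~$c$, as required.

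I expect the degree count and the diameter bound to be routine; the one genuine design point---and the place a naive attempt breaks down---is that a single gadget~$K_{c,m}$ does \emph{not} suffice, because a target vertex in its size-$c$ class~$A$ is adjacent to the whole large independent set~$B$, forcing $U'=\emptyset$. Joining two gadgets with an edge guarantees that for every target at least one full class~$B_i$ stays at distance at least~$2$, which is exactly what makes the inequality of Lemma~\ref{lem:nonstack} hold; balancing~$m$ against the constant~$(d-1)|W|$ is then immediate. It is worth double-checking the degenerate case~$c=1$, where $G$ is just two stars joined at their centres---so the diameter drops to~$3$ and the inequality holds with room to spare---recovering, with more leaves, Veselovac's strongly non-stackable tree~\cite{veselovac:2022}.
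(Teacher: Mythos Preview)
Your proof is correct and follows essentially the same route as the paper: build the graph out of imbalanced complete bipartite gadgets so that the large sides form a big independent set, and then apply Lemma~\ref{lem:nonstack}. The only cosmetic difference is the specific construction---the paper glues bipartite blocks to a base graph (and in the proof of the stronger Theorem~\ref{thm:nonstack-conn} joins two copies of~$K_{c,5c}$ via a~$K_{c,c}$), whereas you join two copies of~$K_{c,8c+1}$ by a single bridge edge; the verification via Lemma~\ref{lem:nonstack} is the same in either case.
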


This can be strengthened as follows.

\begin{theorem}
\label{thm:nonstack-conn}
For any integer~$c\geq 1$, there is a strongly non-stackable graph~$G$ that is $c$-connected.
\end{theorem}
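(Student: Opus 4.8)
The plan is to construct an explicit $c$-connected graph $G$ and verify its strong non-stackability through Lemma~\ref{lem:nonstack}. Fix $c\ge 1$ and set $M:=2c+3$. Let $W=\{w_1,\dots,w_{c+1}\}$ induce a clique, and for each $i\in[c+1]$ add a set $U_i$ of $M$ new vertices, each of them joined to every vertex of $W$ except $w_i$; put $U:=U_1\cup\cdots\cup U_{c+1}$ and let $G$ be the graph on $U\cup W$ so obtained. Then $|W|=c+1$, $|U|=(c+1)M$, the set $U$ is independent, and every vertex of $U$ has degree exactly $c$. To see that $G$ is $c$-connected, I would take any set $C$ of at most $c-1$ vertices: since $|W|=c+1$, the clique $W\setminus C$ is nonempty (indeed has at least two vertices) and connected, and every $u\in U\setminus C$ has $c$ neighbours in $W$, at most $c-1$ of which lie in $C$, so $u$ has a neighbour in $W\setminus C$. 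Hence $G-C$ is connected, and as $G$ has more than $c+1$ vertices this yields $\kappa(G)\ge c$ (in fact $\kappa(G)=c$, witnessed by the neighbourhood of any vertex of $U$).

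Next I would show that $G$ is not $t$-stackable for every target $t\in U\cup W$, applying Lemma~\ref{lem:nonstack} with the independent set $U$ above. First one checks that $G$ has diameter at most $3$: vertices of $W$ are pairwise adjacent; a vertex of $U_i$ is at distance $1$ or $2$ from each vertex of $W$ (distance $2$ only from $w_i$, via some $w_{i'}$ with $i'\ne i$); and any two vertices of $U$ are linked through $W$ in at most three steps. Hence the quantity $d$ in Lemma~\ref{lem:nonstack} satisfies $d\le 3$, so $(d-1)|W|\le 2(c+1)$. If $t\in U$, then $U'=U\setminus\{t\}$ since every other vertex of the independent set $U$ is a non-neighbour of $t$, giving $|U'|=(c+1)M-1>2(c+1)$. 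If $t=w_i\in W$, then the neighbours of $t$ inside $U$ are precisely the vertices of the sets $U_{i'}$ with $i'\ne i$, so the vertices of $U$ at distance $\ge 2$ from $t$ are exactly those of $U_i$, giving $|U'|=M=2c+3>2(c+1)$. In both cases $|U'|>(d-1)|W|$, so Lemma~\ref{lem:nonstack} applies and $G$ is not $t$-stackable; as $t$ was arbitrary, $G$ is strongly non-stackable. Since a $c$-connected graph has minimum degree at least $c$, this also re-proves Theorem~\ref{thm:nonstack-mindeg}.

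I expect the main obstacle to be the inherent tension between the two demands: $c$-connectivity forces minimum degree at least $c$ and hence many edges, which pushes towards a large dense part $W=V\setminus U$ and towards some vertex dominating the independent set $U$ — most dangerously a target vertex placed inside $W$, which would make $U'$ empty and Lemma~\ref{lem:nonstack} useless. The balanced attachment above is designed precisely to defuse this: each vertex of $U$ misses exactly one vertex of $W$, and symmetrically each vertex of $W$ misses exactly the $M$ vertices of the corresponding $U_i$, so $W$ can be kept as small as $c+1$ while every vertex, whether in $U$ or in $W$, still retains at least $M$ non-neighbours inside $U$. Taking $M$ slightly above $2(c+1)$, together with the diameter bound $d\le 3$, then makes $|U'|>(d-1)|W|$ hold uniformly over all targets. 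The routine parts I would leave to fill in are the diameter computation, the degree bookkeeping, and the check that $\kappa(G)=c$ rather than merely $\kappa(G)\ge c$.
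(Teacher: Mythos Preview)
Your argument is correct and complete; the verification of $c$-connectivity, the diameter bound, and the two case checks for Lemma~\ref{lem:nonstack} all go through as you describe. (A tiny sharpening: for $c\ge 2$ your graph actually has diameter~$2$, since any two vertices of $U$ share a common neighbour in $W$; only for $c=1$ does the diameter rise to~$3$. Your uniform bound $d\le 3$ is of course enough.)

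Compared with the paper, you follow the same overall strategy --- build an explicit $c$-connected graph with small diameter and a dominant independent set, then invoke Lemma~\ref{lem:nonstack} --- but your concrete construction is different. The paper glues a copy of $K_{c,5c}$ onto each side of a central $K_{c,c}$, taking $W$ to be the $2c$ central vertices and $U$ the $10c$ outer degree-$c$ vertices; the whole graph is bipartite. You instead start from a clique $K_{c+1}$ and hang onto it $c+1$ blocks of $2c+3$ independent vertices, each block avoiding exactly one clique vertex. Your construction keeps $|W|=c+1$ as small as possible given the degree constraint, which is what makes the delicate case $t\in W$ succeed with the bare margin $|U'|=2c+3>2(c+1)$; the paper's version has more slack ($5c>4c$) at the cost of a larger graph. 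Both are valid instantiations of the same lemma; neither is more general, though your final remark that $c$-connectivity already implies minimum degree $\ge c$, so Theorem~\ref{thm:nonstack-mindeg} follows as a byproduct, is a pleasant observation that the paper does not make explicit.
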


\begin{proof}
Such a graph~$G$ can be constructed as follows:
Take a copy of~$K_{c,c}$, and to each of its partition classes, glue one copy of~$K_{c,5c}$ (with its smaller partition class).
We can take all degree~$c$ vertices of~$G'$ as the set~$U$, i.e., $|U|=10c$ and $|W|=2c$.
For any vertex~$t$ of~$G$ we have $|U'|\geq 5c$.
Furthermore, the diameter of~$G'$ equals~$d'=3$.
Consequently, the inequality $|U'|>(d'-1)|W|$ is satisfied, as $5c>2\cdot 2c$ is equivalent to~$5>4$.
\end{proof}

\begin{figure}[h]
\includegraphics[page=2]{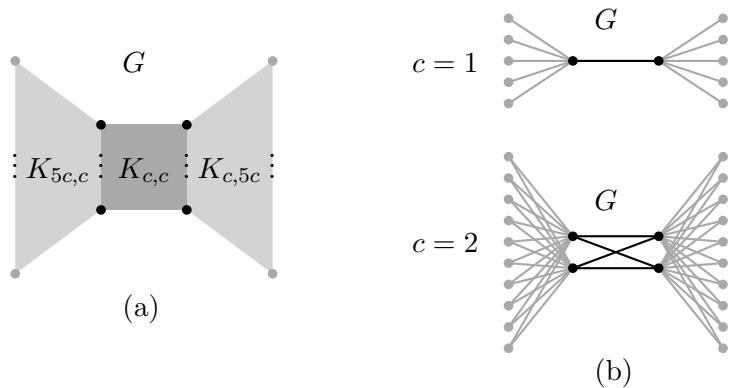}
\caption{Construction of $c$-connected strongly non-stackable graphs explained in the proof of Theorem~\ref{thm:nonstack-conn}.
Part~(a) shows the general construction, and (b) two concrete examples for $c=1$ and~$c=2$.}
\label{fig:conn}
\end{figure}

In view of Lemma~\ref{lem:nonstack}, one may wonder whether there are strongly non-stackable graphs with small independent sets.
Theorem~\ref{thm:spiky} below shows that the answer is yes, and the construction is based on the following lemma.

\begin{lemma}
\label{lem:nonstack-123}
Let $G$ be a graph with diameter at most~3 that has two degree-1 vertices~$u,v$ with a common neighbor, and let $t$ be a target vertex in distance~2 from~$u$ and~$v$.
Then $G$ is not $t$-stackable.
\end{lemma}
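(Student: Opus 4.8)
The plan is to assume for contradiction that $G$ is $t$-stackable, fix a winning sequence of moves, and trace the two cups starting on $u$ and $v$ through their common neighbour $w$. First I would record the setup. Since $\deg(u)=1$, every path leaving $u$ begins with the edge $uw$, so $d(u,t)=1+d(w,t)$ and hence $d(w,t)=1$; in particular $u,v,w,t$ are pairwise distinct. I would then note two facts about any winning sequence on a graph of diameter at most $3$: (a) because each move decreases the number of occupied vertices by one, removes all cups from its source, and can never refill an emptied vertex (a move needs its target occupied, and $t$ is never a source), every vertex $x\neq t$ is emptied by a unique move $M_x$, and the cup count on $x$ is non-decreasing up to $M_x$, at which point it drops to $0$; (b) no non-target vertex ever holds $\ge 4$ cups, since its emptying move would have to reach a vertex at distance $\ge 4$, which does not exist.

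Second, I would analyse how the degree-$1$ vertex $u$ gets emptied. Every move that adds cups to $u$ is either a $1$-move from $w$ (its only neighbour) or, while $u$ still carries a single cup, a $2$-move from a vertex at distance $2$ --- any larger move would push $u$ past $3$, violating (b). Since $w$ makes only one move, the number $r_u$ of cups on $u$ at the time of $M_u$ is $1$, or $2$ (reached only via a $1$-move from $w$, which is therefore the move $M_w$, occurs before $M_u$, and forces $r_w=1$), or $3$ (reached only via a single $2$-move from a distance-$2$ vertex, since reaching $3$ through the chain $1,2,3$ would need two $1$-moves from $w$). If $r_u=3$, then $M_u$ is a $3$-move from $u$ to a vertex at distance $3$; that vertex is not $t$ since $d(u,t)=2$, so it ends up with $\ge 4$ cups, contradicting (b). Hence $r_u\in\{1,2\}$, and when $r_u=1$ the move $M_u$ is the $1$-move $u\to w$, which needs $w$ occupied and so precedes $M_w$. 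By the symmetry of the hypotheses in $u$ and $v$, the same conclusions hold with $r_v,M_v$ in place of $r_u,M_u$.

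Third, I would dispatch the four cases $(r_u,r_v)\in\{1,2\}^2$. If $r_u=r_v=1$, then $M_u$ and $M_v$ are $1$-moves onto $w$ occurring before $M_w$, so $w$ holds at least $1+1+1=3$ cups at $M_w$; by (b) it holds exactly $3$, whence $M_w$ is a $3$-move to a vertex at distance $3$, which is not $t$ (as $d(w,t)=1$) and so ends up with $\ge 4$ cups --- contradiction. If $\{r_u,r_v\}=\{1,2\}$, say $r_u=1$ and $r_v=2$, then $M_u$ is a $1$-move onto $w$ preceding $M_w$, so $w$ holds $\ge 2$ cups at $M_w$, contradicting $r_w=1$ (which is forced by $r_v=2$). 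If $r_u=r_v=2$, then $M_w$ would have to be simultaneously a $1$-move onto $u$ and onto $v$, impossible since $u\neq v$. Every case is contradictory, so $G$ is not $t$-stackable.

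I expect the substantive part to be the second step --- pinning down exactly how cups can accumulate on a degree-$1$ vertex, combined with the observation that the bottleneck $w$ performs only one move. Once that classification and the invariant ``no non-target vertex holds more than $3$ cups'' are in place, the concluding case split is short and mechanical.
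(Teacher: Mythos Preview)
Your argument is correct. Both your proof and the paper's rest on the same two ingredients: the diameter bound, which caps the stack size at any non-target vertex at~$3$, and the bottleneck at the common neighbour~$w$, which can participate in only one emptying move. The difference is organisational. The paper traces the cups originating at~$u$ and~$v$ \emph{forward} to the final move that deposits them on~$t$, classifies by whether that final stack has size~$2$ or~$3$, and observes that any size-$2$ stack containing the~$u$ (or~$v$) cup must also contain the original~$w$ cup; since $u$ and~$v$ cannot land on~$t$ in the same stack, one of them arrives in a $w$-free size-$3$ stack that is necessarily stranded on a leaf. You instead classify by the pair~$(r_u,r_v)$ of cup counts at the emptying moves~$M_u,M_v$, and exploit that each value forces a specific behaviour of~$M_w$ (either $M_w$ must follow $M_x$ when $r_x=1$, or $M_w$ must be the $1$-move $w\to x$ with $r_w=1$ when $r_x=2$). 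The two decompositions are dual views of the same obstruction: the paper reasons about where the $u$ and~$v$ cups end up, you reason about how the vertices $u,v,w$ get emptied. Your version is somewhat more systematic and makes the role of the single emptying move~$M_w$ as the scarce resource more explicit; the paper's version is terser.
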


\begin{proof}
Let $w$ be the common neighbor of~$u$ and~$v$.
The cups on $u$ and~$v$ cannot move to~$t$ directly, as they have distance~2 from~$t$.
Consequently, each of them moves to~$t$ in a stack of size~2 or~3 (as the diameter is at most~3).
A stack of size~2 formed with a cup from~$u$ or~$v$ has to use the cup on~$w$.
Similarly, a stack of size~3 formed with both cups from~$u$ and~$v$ has to use the cup on~$w$.
However, a stack of size~3 located on~$u,v$ or~$w$ cannot move to~$t$, as $u,v,w$ have distance 2 or~1 from~$t$.
It can also not move anywhere else, as this would make it a stack of size at least~4, and then it could not move anymore. 
Consequently, not both cups on~$u$ and~$v$ move to~$t$ in the same stack, and at least one of them, w.l.o.g., $u$ say, moves in a stack that does not contain the cup from~$w$.
However, this stack must again have size~3 and must be located on~$u$ (as the single cup from~$u$ cannot move across~$w$), which again means that this stack can never move to~$t$.
\end{proof}

The family of graphs referred to in the next theorem is illustrated in Figure~\ref{fig:spiky}.

\begin{theorem}
\label{thm:spiky}
Let $G$ be a graph obtained from a clique of size at least~2 by attaching groups of at least three pendant edges to at least two of the clique vertices.
Then $G$ is strongly non-stackable.
\end{theorem}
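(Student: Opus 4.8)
The plan is to deduce the statement directly from Lemma~\ref{lem:nonstack-123}. Write $K$ for the underlying clique, and let $a,b$ be two distinct vertices of~$K$ to which groups of at least three pendant edges have been attached (these exist by hypothesis; other vertices of~$K$ may carry pendant edges as well, which will be irrelevant). First I would record that $G$ has diameter at most~$3$: any two vertices of~$K$ are adjacent, a leaf and a clique vertex are at distance at most~$2$, two leaves hanging off the same clique vertex are at distance~$2$, and two leaves hanging off distinct clique vertices are at distance exactly~$3$. Hence the diameter hypothesis of Lemma~\ref{lem:nonstack-123} is satisfied for every choice of target vertex.

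Next I would fix an arbitrary target vertex~$t$ of~$G$ and, in each case, exhibit two degree-$1$ vertices $u,v$ with a common neighbor such that $d(u,t)=d(v,t)=2$; Lemma~\ref{lem:nonstack-123} then immediately gives that $G$ is not $t$-stackable. If $t$ is a clique vertex, then since $a\neq b$ at least one of them, say~$w$, differs from~$t$; the vertex~$w$ carries at least three pendant leaves, and for any two such leaves $u,v$ we have the common neighbor~$w$ and $d(u,t)=d(v,t)=2$, because the path from such a leaf through~$w$ to~$t$ has length~$2$ (here $w$ and~$t$ are adjacent in~$K$). If instead $t$ is a leaf, then $t$ is attached to some clique vertex~$w$, and by construction $w$ carries at least three pendant leaves in total, hence at least two leaves $u,v$ distinct from~$t$; these have the common neighbor~$w$ and $d(u,t)=d(v,t)=2$. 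In all cases Lemma~\ref{lem:nonstack-123} applies, so $G$ is not $t$-stackable for every vertex~$t$, i.e., it is strongly non-stackable.

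The only point that requires care — and the reason the hypothesis demands groups of at least \emph{three} pendant edges rather than two — is the case where $t$ is itself a leaf: there the two witnesses $u,v$ must be found among the remaining leaves hanging off the same clique vertex as~$t$, of which there are at least two precisely because at least three were attached there. Beyond that, the argument is a routine verification of distances in the two-level structure of~$G$, relying only on the fact that the clique has diameter~$1$, and I do not anticipate any further obstacle.
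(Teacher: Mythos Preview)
Your proof is correct and follows exactly the same approach as the paper: both deduce the result from Lemma~\ref{lem:nonstack-123} by checking that the diameter is at most~$3$ and that, for every target~$t$, two degree-$1$ vertices with a common neighbor lie at distance~$2$ from~$t$. The paper compresses this into a single sentence, while you spell out the case distinction (clique vertex versus leaf) and the reason why three pendant edges per group are needed; your added detail is accurate and the observation about the leaf case is precisely the point.
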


\begin{proof}
$G$ has diameter~3, and regardless of the choice of target vertex~$t$ in~$G$, there are always two degree~1 vertices with a common neighbor in distance~2 from~$t$, so Lemma~\ref{lem:nonstack-123} applies.
\end{proof}

\begin{figure}[h]
\includegraphics[page=2]{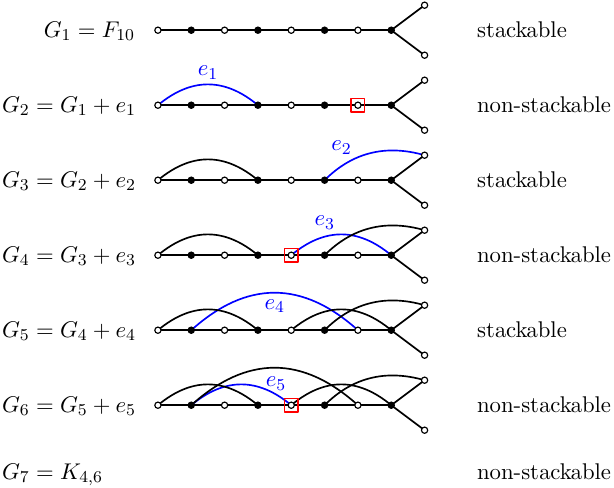}
\caption{Illustration of the graphs in Theorem~\ref{thm:spiky}.}
\label{fig:spiky}
\end{figure}

\section{Stackability is not monotone}
\label{sec:nonmono}

In this section, we show that the property of being stackable is not monotone under adding edges to the graph.

Fay, Hurlbert and Tennant~\cite{MR4749373} completely characterized stackability of complete partite graphs.
We need the following special case of their more general result.
In this statement, $K_{a,b}$ is the complete bipartite graph with partition classes of size~$a$ and~$b$.

\torsten{Check correct theorem numbers from other papers.}

\begin{theorem}[{\cite[Cor.~12]{MR4749373}}]
\label{thm:Kab}
The graph~$K_{a,b}$, $a\leq b$, is stackable if and only if $b=a$ or $b=a+1$.
The graph~$K_{a,b}$, $a+2\leq b$ is not $t$-stackable for any vertex $t$ in the larger partition class, but it is $t$-stackable for any vertex $t$ in the smaller partition class.
\end{theorem}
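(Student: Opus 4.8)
The plan is to exploit that distances in $K_{a,b}$ take only the values $1$ and $2$: two vertices lie at distance $1$ when they are in different partition classes and at distance $2$ when they are in the same class, so the diameter is at most $2$. Consequently every legal move is either a \emph{$1$-move} (a single cup transferred along an edge onto a nonempty vertex) or a \emph{$2$-move} (a stack of exactly two cups transferred between two same-class vertices, the target being nonempty), and any stack of three or more cups is frozen forever. Throughout write $A$ for the partition class of size $a$ and $B$ for the one of size $b$, so $a\le b$.

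First I would show that $K_{a,b}$ is $t$-stackable for every target $t$ in the \emph{smaller} class $A$; this will give the last sentence of the theorem and the ``if'' direction of the first. Enumerate $A\setminus\{t\}=\{a_1,\dots,a_{a-1}\}$ and fix distinct $b_1,\dots,b_{a-1}\in B$ (possible since $a\le b$). In a first phase perform the $1$-moves $b_i\to a_i$ for $i=1,\dots,a-1$; these are legal because each $a_i$ still carries its original cup, and they leave a stack of two on every $a_i$. In a second phase perform the $2$-moves $a_i\to t$ for $i=1,\dots,a-1$; these are legal because $a_i$ and $t$ are distinct same-class vertices and $t$ is nonempty. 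Now $t$ carries $2a-1$ cups, and the $b-a+1\ge 1$ untouched vertices of $B$ each still carry one cup; moving those onto $t$ by $1$-moves (legal since $t$ is nonempty) leaves $t$ with $2a-1+(b-a+1)=a+b$ cups, completing the stacking. (For $a=1$ the first two phases are empty and $t$ simply absorbs all of $B$.)

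Next I would treat targets in the \emph{larger} class in the case $b=a+1$. Let $t\in B$, enumerate $B\setminus\{t\}=\{b_1,\dots,b_a\}$ and $A=\{a_1,\dots,a_a\}$, and for each $i=1,\dots,a$ perform the $1$-move $a_i\to b_i$ followed by the $2$-move $b_i\to t$; after these $2a$ moves $t$ carries $1+2a=a+b$ cups. Together the two constructions cover every target of $K_{a,a}$ (both classes have size $a$, so the first construction applies) and of $K_{a,a+1}$ (size-$a$ class by the first construction, size-$(a+1)$ class by the second), so $K_{a,a}$ and $K_{a,a+1}$ are stackable.

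Finally I would show that $K_{a,b}$ with $b\ge a+2$ is not $t$-stackable for any $t$ in the larger class $B$ — which also supplies the ``only if'' direction of the first sentence. This is immediate from Lemma~\ref{lem:nonstack} applied to the independent set $U:=B$: then $U'=B\setminus\{t\}$ (every vertex of $B$ is at distance $2$ from $t$) has size $b-1$, $W=A$ has size $a$, and $d=2$, so the hypothesis $|U'|>(d-1)|W|$ becomes $b-1>a$, which holds exactly when $b\ge a+2$. Intuitively, a cup starting in $B\setminus\{t\}$ can only reach $t$ by first being merged, via a vertex of $A$, into a size-$2$ stack, and the $a$ vertices of $A$ are too few to service all $b-1$ vertices of $B\setminus\{t\}$. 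The only non-routine ingredients are the two explicit move sequences in the positive direction; the cup-count bookkeeping is mechanical, and the negative direction is a one-line application of Lemma~\ref{lem:nonstack}.
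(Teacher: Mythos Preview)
Your proof is correct. Note, however, that the paper does not actually prove this theorem: it is quoted as a known result from Fay, Hurlbert and Tennant~\cite{MR4749373}, and the only proof-related remark in the present paper is the sentence immediately following the statement, observing that the non-stackability direction can be obtained from Lemma~\ref{lem:nonstack} with $U$ equal to the larger partition class. Your negative direction does exactly this, so on that half you match the paper's suggested approach line for line.

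What you add beyond the paper is a self-contained treatment of the positive direction. Your explicit two-phase move sequences (pair up $a-1$ vertices of $B$ with $A\setminus\{t\}$ via $1$-moves, collapse onto $t$ via $2$-moves, then sweep the remaining $B$-vertices onto $t$; and the analogous pairing for $t\in B$ when $b=a+1$) are correct and essentially the natural construction; the legality checks and cup counts go through as you state. The observation that stacks of size $\geq 3$ are frozen in a diameter-$2$ graph is a clean way to frame why only $1$- and $2$-moves matter. So your write-up is strictly more than what the present paper contains, and for the one part the paper does comment on, you take the same route.
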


We note that the non-stackability statement in the previous theorem can be proved by applying Lemma~\ref{lem:nonstack} with~$U$ as the larger partition class.

Veselovac~\cite{veselovac:2022} proved the following result.
Let $F_n$ be the graph formed from a path on $n-2$ vertices by appending two pendant edges to the same end vertex of the path; see Figure~\ref{fig:F10}.

\begin{theorem}[{\cite[Thm~3.10]{veselovac:2022}}]
\label{thm:Fa}
The graph~$F_n$ is stackable for all $n\geq 9$.
\end{theorem}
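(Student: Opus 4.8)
The plan is to analyze the structure of $F_n$ and show directly how to stack all $n$ cups onto any target vertex $t$. Write the path part of $F_n$ as $(v_1, v_2, \ldots, v_{n-2})$, with the two pendant vertices $u, u'$ both attached to $v_1$. Note that $F_n$ has a unique degree-$3$ vertex (namely $v_1$ when $n\ge 5$), and is "almost" a path: deleting either $u$ or $u'$ leaves a Hamilton path. The first key observation is that $F_n$ admits a Hamilton \emph{path} precisely between $u$ and $u'$ is impossible, but the path $(u, v_1, v_2, \ldots, v_{n-2})$ is a spanning path missing only $u'$, and symmetrically. So for most target vertices we are in good shape, and the real work is handling the pendant vertices and targets near $v_1$.

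First I would dispose of the easy cases. If the target $t$ lies on the path part, say $t = v_j$ with $j\ge 2$, then I would try to split $F_n$ into the spanning path $Q := (u, v_1, v_2, \ldots, v_{n-2})$ (which contains $t$) together with the leftover vertex $u'$. By Lemma~\ref{lem:path} applied to $Q$, all $n-1$ cups of $Q$ can be stacked onto $t = v_j$; but the lone cup on $u'$ has $d(u', v_j) = j$, and after the stacking there is indeed a cup on $v_j$, so if $j \le$ the current distance this is a legal final move only when $d(u',v_j)=j$, which always holds. Wait — one must be careful that $u'$ is stacked \emph{before} collapsing $Q$, or simply observe $d(u',t)=j$ and move those single cup last; in fact the cleanest route is: stack $Q\setminus\{u'\}$-type subpaths as in Lemma~\ref{lem:path}, then finish with the edge-by-edge argument. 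So the genuine difficulty is concentrated on $t \in \{u, u', v_1\}$, and possibly $t = v_2$.

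For $t = v_1$: I would stack the cups from the subpath $(v_2, \ldots, v_{n-2})$ onto $v_2$ by Lemma~\ref{lem:path} (this needs $n-3$ vertices, fine), then move those $n-3$ cups from $v_2$ to $v_1$ if $d(v_2,v_1)=1$ — which it is — landing $n-2$ cups on $v_1$; then the single cups on $u$ and $u'$ move to $v_1$ one at a time. For $t = u$ (and symmetrically $t = u'$): here $d(u', u) = 2$, so the cup on $u'$ cannot reach $u$ in one move, and $d(v_j, u) = j$. The strategy is to first consolidate everything except $u$ onto a vertex at the right distance. I would stack the path $(v_1, v_2, \ldots, v_{n-2})$ together with $u'$ — but $u'$ only attaches to $v_1$, so $(u', v_1, v_2, \ldots, v_{n-2})$ is a path with $n-1$ vertices; apply Lemma~\ref{lem:path} to stack all $n-1$ of its cups onto $v_{n-2}$, or better onto $v_2$, and then note $d(v_2, u) = 2 \ne n-1$ in general. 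The fix: stack the $n-1$ cups onto the vertex $v_{n-1-?}$... this is where the bookkeeping bites. The right move is to stack $(u', v_1, v_2, \ldots, v_{n-2})$ onto $v_{n-3}$ — no. Let me instead stack onto $v_1$: we get $n-1$ cups on $v_1$, then move them to $u$ since $d(v_1,u)=1$? That is only legal if the stack size equals the distance, i.e.\ $n-1 = 1$, false. So the correct final configuration must place the big stack at distance exactly equal to its size from $u$; using Lemma~\ref{lem:path} we can stack the $n-1$ cups of $(u',v_1,\ldots,v_{n-2})$ onto whichever of its vertices we like, and we choose $v_{n-2}$, which is at distance $n-2$ from $u$ — off by one. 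Therefore split off the last vertex: stack $(u', v_1, \ldots, v_{n-3})$ ($n-2$ cups) onto $v_{n-2}$? Not adjacent in that subpath.

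The main obstacle, then, is exactly this parity/counting mismatch for the targets $u$ and $u'$: we must partition $V(F_n) \setminus \{t\}$ into subpaths each of which has its vertex-count equal to the $F_n$-distance from one of its vertices to $t$, and because $F_n$ is a tree, distances are rigid. I expect the resolution is a small-case analysis modulo the residue of $n$: partition the long path into a bounded-size "head" near $v_1$ plus a tail, choosing the split point so the tail $(v_a, \ldots, v_{n-2})$ has exactly $d(v_b, t)$ vertices for some $b$ in it, which for $t = u$ means length $= b$, i.e.\ $n - 2 - a + 1 = b$ for some $a \le b \le n-2$ — solvable whenever the tail is long enough, which it is for $n \ge 9$. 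The head (a bounded number of vertices including $u'$ and $v_1, \ldots, v_{a-1}$) is then handled by hand. I would present this as: (i) reduce to $t \in \{u, v_1\}$ by symmetry and the path argument; (ii) handle $t = v_1$ as above; (iii) for $t = u$, exhibit an explicit partition of $(v_1, \ldots, v_{n-2}, u')$ into $O(1)$ subpaths of correct sizes, verifying the required distance conditions and invoking Lemma~\ref{lem:path} on each, with a final cleanup of the constantly many remaining single cups near $v_1$; the threshold $n \ge 9$ is precisely what makes the tail long enough for step (iii).
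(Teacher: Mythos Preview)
The paper does not prove this statement; it is quoted verbatim from Veselovac's thesis and used only as a black box in the proof of Theorem~\ref{thm:nonmono}. So there is nothing here to compare against, and your proposal has to be judged on its own.

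It does not hold up. The $t=v_1$ case contains an outright rule violation: you stack $(v_2,\ldots,v_{n-2})$ onto $v_2$ and then ``move those $n-3$ cups from $v_2$ to $v_1$ if $d(v_2,v_1)=1$''. A stack of $r$ cups moves only to a vertex at distance \emph{exactly}~$r$, so for $n\ge 9$ the $n-3\ge 6$ cups on $v_2$ cannot move to a neighbour. (Ironically this case is easy once done right: apply Lemma~\ref{lem:path} to the path $(u',v_1,\ldots,v_{n-2})$ with target~$v_1$, then move the single remaining cup on~$u$ to~$v_1$.) The same confusion infects your $t=v_j$, $j\ge 2$, case: after collapsing $Q=(u,v_1,\ldots,v_{n-2})$ onto~$v_j$, the leftover single cup on~$u'$ is at distance $j\ge 2$ from~$v_j$, and its unique neighbour~$v_1$ is now empty, so that cup is stranded permanently. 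You half-notice this but the proposed remedy (``stack $Q\setminus\{u'\}$-type subpaths \ldots\ then finish with the edge-by-edge argument'') is not an argument.

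The structural obstacle your sketch never overcomes is that $u$ and $u'$ share their only neighbour~$v_1$, so any partition of $V(F_n)$ into vertex-disjoint paths isolates one of them as a singleton, and a singleton can be absorbed only when the target is adjacent to it. Hence for every target other than~$v_1$ one cannot simply partition and invoke Lemma~\ref{lem:path}; one must interleave moves so that both pendants are merged through~$v_1$ before it empties, and then route the resulting small stack into the main path compatibly with the collapse onto~$t$. Your $t=u$ discussion correctly spots the off-by-one mismatch, but the promised ``bounded head handled by hand'' is never exhibited, and the threshold $n\ge 9$ is never actually used. As it stands, no target vertex is handled correctly.
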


\begin{theorem}
\label{thm:nonmono}
There are infinitely many graphs~$G\seq H$ on the same vertex set such that $G$ is stackable but $H$ is non-stackable.
\end{theorem}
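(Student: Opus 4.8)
The plan is to produce an explicit infinite family of pairs $(G,H)$. The first observation guiding the construction is that $G$ must be stackable but must \emph{not} admit a Hamilton path: if $G$ had a Hamilton path, then so would every graph $H$ with $G\seq H$ on the same vertex set, and $H$ would be stackable by Theorem~\ref{thm:ham}, so no such pair could exist. The graphs $F_n$ for $n\ge 9$ are tailor-made for this purpose: they are stackable by Theorem~\ref{thm:Fa}, yet they have three vertices of degree~$1$ (the two pendant vertices and the far endpoint of the path), so they have no Hamilton path, a path having only two endpoints. So I would take $G:=F_n$.

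To build $H$, write the defining path of $F_n$ as $v_1,v_2,\ldots,v_{n-2}$, let $u_1,u_2$ be the two pendant vertices attached to the end vertex $v_1$, and let $H$ be obtained from $F_n$ by adding the edges $v_1 v_i$ for all $i\in\{3,\ldots,n-2\}$, that is, by making $v_1$ adjacent to every other vertex. Then $G\seq H$ on the same $n$-element vertex set. In $H$ the vertex $v_1$ is universal, so $H$ has diameter $2\le 3$, whereas $u_1$ and $u_2$ still have degree~$1$ and share the common neighbor $v_1$. Since $n\ge 9$, the vertex $t:=v_2$ is a vertex of $H$ distinct from $v_1,u_1,u_2$, and $d_H(u_1,t)=d_H(u_2,t)=2$. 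Hence all hypotheses of Lemma~\ref{lem:nonstack-123} are satisfied, which gives that $H$ is not $t$-stackable, and in particular $H$ is non-stackable. Letting $n$ range over all integers $\ge 9$ then yields infinitely many pairs $G\seq H$ as required.

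There is no serious obstacle in this argument; the entire work is in arranging the construction so that Lemma~\ref{lem:nonstack-123} applies, and the two points to verify are both immediate: that adding edges incident only to $v_1$ leaves $u_1$ and $u_2$ of degree~$1$ with common neighbor $v_1$, and that the resulting graph has diameter at most~$3$ (indeed $2$, since $v_1$ is now universal). The one thing worth emphasizing, in the spirit of the paper's introductory remarks, is the point of the first paragraph: this is a genuine instance of non-monotonicity of stackability, not a trivial one, precisely because $F_n$ is stackable while admitting no Hamilton path, so the phenomenon does not merely reflect the fact that having a Hamilton path can be destroyed by deleting edges.
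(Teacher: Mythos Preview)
Your proof is correct, but it takes a genuinely different route from the paper's. Both start from $G:=F_n$, which is stackable by Theorem~\ref{thm:Fa}. The paper then restricts to even $n\ge 10$, observes that $F_n$ is bipartite with partition classes of sizes $n/2-1$ and $n/2+1$, and takes $H:=K_{n/2-1,\,n/2+1}$, which is non-stackable by Theorem~\ref{thm:Kab}. You instead keep all $n\ge 9$, take $H$ to be $F_n$ with $v_1$ made universal, and invoke Lemma~\ref{lem:nonstack-123} with the pendant pair $u_1,u_2$ and target $t=v_2$. Your verification that the hypotheses of that lemma hold (diameter~$2$, $u_1,u_2$ still of degree~$1$ with common neighbor~$v_1$, and $d_H(u_i,v_2)=2$) is straightforward and correct.

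What each approach buys: the paper's argument is a two-line application of two cited results and needs nothing from Section~\ref{sec:nonstack}; yours works for both parities of~$n$ and adds far fewer edges to~$G$ (only $n-4$ edges, all incident to a single vertex, rather than the $\Theta(n^2)$ edges needed to reach a complete bipartite graph). Your opening paragraph, noting that any valid $G$ must fail to have a Hamilton path, is a nice structural observation, though it is motivation rather than a logical step in the proof.
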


\begin{proof}
We take $G:=F_n$ for some even integer~$n\geq 10$, which is stackable by Theorem~\ref{thm:Fa}.
The graph~$G$ is bipartite with partition classes of size $n/2-1$ and~$n/2+1$.
Consequently, it is a subgraph of~$K_{n/2-1,n/2+1}$, which by Theorem~\ref{thm:Kab} is non-stackable.
\end{proof}

The previous proof argues that~$F_{10}$ is stackable, whereas~$K_{4,6}\supset F_{10}$ is not.
In fact, Figure~\ref{fig:F10} shows a sequence of five edges~$e_1,\ldots,e_5$ of~$K_{4,6}\setminus F_{10}$ that can be added to~$F_{10}$ one after the other such that the resulting graphs are alternatingly stackable and non-stackable, respectively, which has been checked by computer.

\begin{figure}[h]
\includegraphics[page=1]{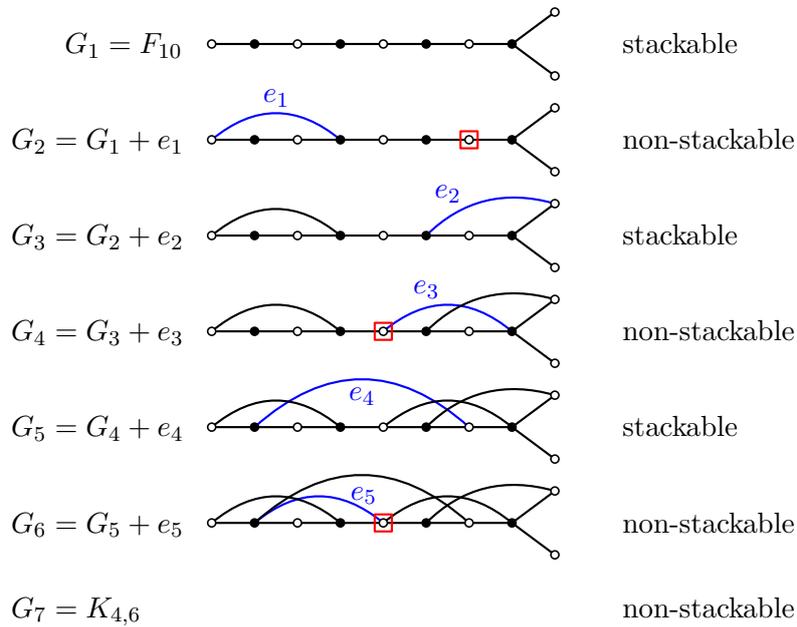}
\caption{Illustration of the non-monotonicity of stackability under adding edges.
For the non-stackable graphs, the vertices $t$ for which the graph is not $t$-stackable are highlighted.}
\label{fig:F10}
\end{figure}

\begin{theorem}
\label{thm:strong-nonmono}
There are two graphs~$G\seq H$ on the same vertex set such that $G$ is stackable but $H$ is strongly non-stackable. 
\end{theorem}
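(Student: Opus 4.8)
The plan is to exhibit one explicit pair $G\subseteq H$ on a common vertex set and to verify the two required properties separately. Two simple observations narrow the search. First, $G$ cannot admit a Hamilton path: otherwise the supergraph $H$ would admit one as well, and then $H$ would be stackable by Theorem~\ref{thm:ham}. So $G$ must be a stackable graph \emph{without} a Hamilton path — for instance one of the graphs of Figure~\ref{fig:nonham}, a graph $F_n$ from Theorem~\ref{thm:Fa}, or a dandelion/starfish in the sense of Veselovac~\cite{veselovac:2022}. Second, $H$ cannot be a tree, since a tree has no proper connected spanning subgraph and we would be forced to take $G=H$, which is impossible. Hence the task is to find a strongly non-stackable graph $H$ that carries a proper, connected, stackable spanning subgraph $G$.

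For the construction of $H$ I would steer away from the pendant-heavy strongly non-stackable graphs (Theorem~\ref{thm:spiky} and the cactus construction of Theorem~\ref{thm:cactus}). In those graphs every vertex of a pendant group has degree~$1$ already in $H$, hence also in $G$; but a vertex carrying three or more pendant leaves is forced to accumulate a stack of at least four cups before any of its cups can leave, and such a stack cannot be moved once the ambient graph has small diameter — so the connected spanning subgraphs of these $H$ tend not to be stackable. Instead I would aim for an $H$ whose strong non-stackability comes from Lemma~\ref{lem:nonstack}, i.e.\ from a large independent set~$U$ with $|U'|>(d-1)|W|$, and then take $G$ to be a spanning subgraph that keeps $U$ independent (it adds no edge inside $U$) but has strictly larger diameter than~$H$, so that the inequality of Lemma~\ref{lem:nonstack} is no longer forced on~$G$ and there is enough room to carry out the stackings. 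Concretely: take a small stackable graph $G$ of diameter at least~$3$ containing a big independent set, add a few edges among its remaining vertices to bring the diameter down to~$2$, and check that the resulting $H$ meets the hypotheses of Lemma~\ref{lem:nonstack} for every target. As a robust fallback — in the spirit of the computer checks behind Figures~\ref{fig:nonham} and~\ref{fig:F10} — one simply searches by computer for the smallest such pair and records it.

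The step I expect to be the main obstacle is certifying that $G$ is stackable. Every general tool is unavailable here: $G$ has no Hamilton path, so Theorem~\ref{thm:ham} does not apply; $G$ inherits from $H$ exactly the kind of large independent set that makes stacking hard; and the positive results of Sections~\ref{sec:ham}--\ref{sec:bip} (path partitions into long paths, powers of graphs, and so on) do not obviously apply to graphs that can be completed to a strongly non-stackable graph. So one has to find a graph that threads this needle and then exhibit, for each target vertex of~$G$, an explicit sequence of stacking moves (or a certificate from an exhaustive search). Once a suitable $G$ has been fixed, the complementary claim that $H$ is strongly non-stackable should follow from a single application of Lemma~\ref{lem:nonstack} (or of Lemma~\ref{lem:nonstack-123}, should one return to a bounded-diameter pendant example).
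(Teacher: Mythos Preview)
Your proposal is not yet a proof: it is a search strategy that ends by conceding that the hard step --- certifying stackability of~$G$ --- is left open. More importantly, the one concrete argument you give steers you away from the construction that actually works.

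You dismiss the graphs of Theorem~\ref{thm:spiky} as candidates for~$H$ on the grounds that a vertex with three pendant leaves forces a stack of four cups, and ``such a stack cannot be moved once the ambient graph has small diameter.'' But the ambient graph at that point is~$G$, not~$H$, and you yourself note two sentences later that $G$ should have strictly larger diameter than~$H$. If $H$ is a clique on~$11$ vertices with two pendant triples (diameter~$3$) and $G$ is obtained by thinning the clique down to a path between the two appendix vertices, then $G$ has diameter~$12$, and a stack of four cups accumulated at an appendix vertex can move freely along that path. This is exactly the paper's construction: $H$ is strongly non-stackable by Theorem~\ref{thm:spiky}, and $G$ is shown to be $t$-stackable for each of the seven targets up to symmetry by explicit stacking sequences (Figure~\ref{fig:G17}). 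You were right that the verification of~$G$ would be case-by-case; you were wrong about which~$H$ to start from.

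The alternative you sketch --- building~$H$ around Lemma~\ref{lem:nonstack} and taking $G$ to be a higher-diameter spanning subgraph --- may well also lead to an example, but as written it produces no candidate pair and leaves the acknowledged main obstacle untouched.
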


\begin{proof}
Such a pair of graphs~$G$ and~$H$ on 17 vertices is shown in Figure~\ref{fig:strong}.
Specifically, $H$ is obtained from a clique on 11 vertices by appending two triples of pendant edges to two of its vertices, and $G$ is obtained by removing edges from the clique so that it becomes a path between the appendix vertices.
The graph~$G$ is stackable as shown in Figure~\ref{fig:G17}, whereas $H$ is strongly non-stackable by Theorem~\ref{thm:spiky}.
\end{proof}

\begin{figure}[h]
\includegraphics[page=3]{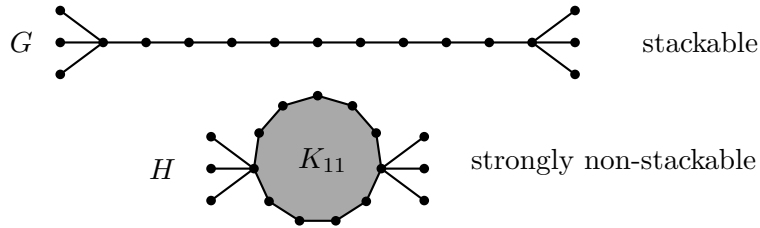}
\caption{Illustration of the graphs in Theorem~\ref{thm:strong-nonmono}.}
\label{fig:strong}
\end{figure}

\begin{figure}[h]
\includegraphics[page=4]{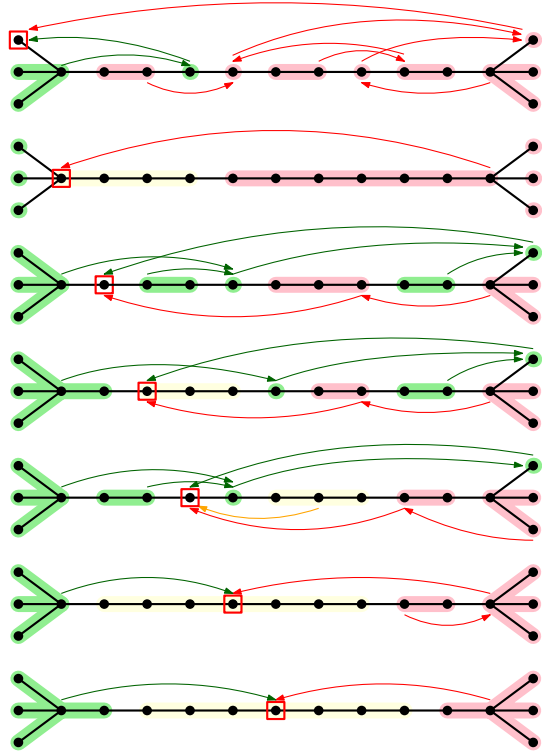}
\caption{Stackability of the graph~$G$ from Theorem~\ref{thm:strong-nonmono}. 
By symmetry, only the 7 target vertices marked by the little squares need to be checked.
Solutions on subpaths or -stars (highlighted by colors) are not shown in full detail for clarity.}
\label{fig:G17}
\end{figure}

\section{Open questions}
\label{sec:open}

We conclude with some challenging open questions.

Can we find other interesting applications for our chunking lemma (Lemma~\ref{lem:chunk})?

Can stackable trees be characterized?

Theorems~\ref{thm:nonmono} and~\ref{thm:strong-nonmono} show that stackability is not monotone under adding edges.
How `bad' can this become?
Specifically, can we find, for each integer $r\geq 3$, a sequence of graphs $G_1\seq G_2\seq \cdots \seq G_r$ on the same vertex set, such that $G_1,G_3,G_5,\ldots$ are stackable and $G_2,G_4,G_6,\ldots$ are (strongly) non-stackable?
Furthermore, are there graphs $G\seq H$ on the same vertex set that differ only in few edges (ideally only one), such that $G$ is stackable and~$H$ is strongly non-stackable?

We may also consider a refined version of the game, where the \defi{weight} of a move is the number of cups that are being moved.
For a given graph~$G$ and target vertex~$t$, we are looking for the sequence of moves with target vertex~$t$ that minimizes the sum of weights of all moves, and we write~$\mu(G,t)$ for this quantity.
We can think of~$\mu(G,t)$ as the total weight of the cups being `transported' during a solution.
Table~\ref{tab:weight} shows this quantity for all target vertices of paths~$P_n$ on vertices $1,\ldots,n$ for all $n\leq 12$.
What can we say about~$\mu(G,t)$ for interesting stackable graphs~$G$?

\begin{table}
\caption{Minimum weight solutions on paths with $n=1,\ldots,12$ vertices.}
\label{tab:weight}
\begin{tabular}{c|c}
$n$ & $\mu(P_n,i)$, $i=1,\ldots,n$ \\\hline
1 & $\{0\}$ \\
2 & $\{1, 1\}$ \\
3 & $\{3, 2, 3\}$ \\
4 & $\{4, 4, 4, 4\}$ \\
5 & $\{6, 5, 6, 5, 6\}$ \\
6 & $\{9, 7, 7, 7, 7, 9\}$ \\
7 & $\{11, 10, 9, 8, 9, 10, 11\}$ \\
8 & $\{12, 12, 12, 10, 10, 12, 12, 12\}$ \\
9 & $\{14, 13, 14, 13, 12, 13, 14, 13, 14\}$ \\
10 & $\{17, 15, 15, 15, 15, 15, 15, 15, 15, 17\}$ \\
11 & $\{19, 18, 17, 16, 17, 18, 17, 16, 17, 18, 19\}$ \\
12 & $\{22, 20, 20, 18, 18, 20, 20, 18, 18, 20, 20, 22\}$ \\
\end{tabular}
\end{table}

From a computational perspective, what is the complexity of deciding whether a given graph~$G$ is stackable, or $t$-stackable for some vertex~$t$?

\bibliographystyle{alpha}
\bibliography{refs}

\end{document}